\documentclass[preprint,12pt,authoryear]{elsarticle}

\journal{Journal of Algebra}

\usepackage{amssymb}
\setcitestyle{square,sort,comma,numbers}

\usepackage{graphicx}

\usepackage{lineno,hyperref}
\usepackage{amssymb}
\usepackage[top=1in, bottom=1in, left=1in, right=1in]{geometry}
\usepackage{amssymb,amsfonts}
\usepackage{bbm}
\usepackage[all,arc]{xy}
\usepackage{enumerate}
\usepackage{mathptmx}
\usepackage{mathrsfs}
\usepackage[T1]{fontenc}
\usepackage[applemac]{inputenc}
\usepackage[english]{babel}
\usepackage{times}
\usepackage{multirow}
\usepackage{color}
 \usepackage[english]{babel}
\usepackage{graphicx}
\usepackage{float}
\usepackage{xcolor}
 \usepackage{amsmath}
\usepackage{graphicx}
\usepackage{multicol}
\usepackage{amsmath}
\usepackage{subeqnarray}
\usepackage{url}
\usepackage[bottom]{footmisc}

 \usepackage{amsthm}
\usepackage{relsize}
\usepackage{bm}
\usepackage{yfonts}
\usepackage[mathcal]{euscript}

\usepackage{etex}

\usepackage{braket}
\usepackage{tikz-cd}

\usepackage{verbatim}

\usepackage{graphicx}

\newtheorem{thm}{Theorem}[section]
\newtheorem{cor}[thm]{Corollary}
\newtheorem{prop}[thm]{Proposition}
\newtheorem{lem}[thm]{Lemma}

\theoremstyle{definition}

\newtheorem{examp}{Example}

\newtheorem{rem}{Remark}

\modulolinenumbers[5]
\usepackage{psfrag}
\usepackage[inline]{enumitem}   
\makeatletter

 \DeclareMathOperator{\Tr}{Tr}
\DeclareMathOperator{\Id}{Id}
\DeclareMathOperator{\Aut}{Aut}

\begin{document}
\begin{frontmatter}

\title{Linear  representation of groups  associated to graphs}

\author[A]{Alain Bretto\corref{cor1}}
\ead{alain.bretto@unicaen.fr}
\cortext[cor1]{Corresponding author}
\author[B]{Alain Faisant}
\ead{faisant@univ-st-etienne.fr}
\author[A,C]{Neda Moradi}
\ead{neda.moradi@unicaen.fr}
\author[A,D]{Mehrdad Nasernejad}
\ead{m_nasernejad@yahoo.com}
\address[A]{Normandie Univ-Caen, GREYC CNRS UMR-6072, Campus II, Bd Marechal Juin BP 5186, 14032 Caen cedex 5, France}
\address[B]{Univ Lyon, UJM-Saint-\'Etienne, CNRS, ICJ UMR 5208, 42023 Saint-\'Etienne, France}
\address[C]{University of Kashan, Faculty of Mathematical Sciences, Kashan, Iran}
\address[D]{Univ. Artois, UR 2462, Laboratoire de Math\'ematique de Lens (LML), F-62300 Lens, France}

\begin{abstract}
In this article, we introduce a group  linear representation associated to a graph. 
We study this representation as well as the canonical decomposition of the associated module. A character study is conducted, demonstrating the relevance of this approach to graph theory.  We also develop the links that exist between algebra and graphs, particularly in the language of representations.

\end{abstract}

\begin{keyword}
Graph theory \sep linear representation of groups \sep combinatorics.

\MSC[2020] 05C25 \sep 20C15 \sep 05E18.
\end{keyword}

 \end{frontmatter}


\section{Introduction}

Graph theory can be approached through different branches of mathematics. One of the first approaches is naturally combinatorics, but there are also other aspects of mathematics that allow for relevant analysis of graphs, \cite{bre3}.  The topological aspect is well developed and provides high-quality results and applications in various fields, such as computer graphics, \cite{gross,archdeacon}. Although more recent, calculus or analysis on graph has developed rapidly thanks to applications in image analysis and data analysis, \cite{schott}. Algebra in graph theory, on the other hand, is older and gave rise to algebraic graph theory, \cite{brouwer,godsil,beineke}.
The algebraic theory of graphs mainly comprises three fundamental axes: the use of linear algebra, the use of group theory, and the study of graph invariants. What is curious is that virtually no other areas of algebra, which is an extremely rich branch of mathematics, are addressed in graph theory.\\
Group theory, primarily through the group of automorphisms and its subgroups, allows us, mainly, to study graph symmetries in the general sense of the term. Furthermore, results such as \textsc{Fruch}'s theorem highlight the fundamental relationships between groups and graphs, \cite{bre3}.\\
The spectral theory of graphs is undoubtedly the one that gives the most significant results, thanks in particular to the study of the eigenvalues of the adjacency matrix, but also to the eigenvalues of  the laplacian.\\
The invariants, for their part, are in most cases approached from the perspective of specific polynomials such as:
\begin{itemize}
\item[-] the chromatic polynomial;
\item[-] the polynomial of matchings;
\item[-] the polynomial of stable;
\item[-] \textsc{Tutte}'s polynomial;
\item[-] and many others.
\end{itemize}
In this work, we present a new algebraic framework that establishes a deep link between graph theory and group representation theory. Our approach transposes the combinatorial problem of graph isomorphism into the algebraic domain of module equivalence, \cite{ABF}, thus exploiting the power of character theory and group representations.\\
The central construction associates to each graph $\Gamma$ a natural module structure on a group algebra $R = \overline{\mathbb{F}}_p[\mathbb{E}]$, where $\mathbb{E}$ denotes the edge group of the graph and $\overline{\mathbb{F}}_p$ is an algebraic closure of the field $\mathbb{F}_p$, $p$ being an odd prime number. This module results from the canonical action of the edge group on the vertex space, extending the incidence structure of the graph to a linear group representation that fully characterizes the graph under study.

Our main theoretical contribution establishes a fundamental equivalence: two graphs are isomorphic if and only if their associated modules are isomorphic, which in turn holds if and only if their characters coincide. This triple equivalence provides  a conceptual bridge between graph theory and linear representation theory of group.

As a concrete application of this framework, we introduce a new graph invariant - the signature- which is related to the character of the representation. This allows us to study the topology of the analyzed graph. Furthermore, thanks to the character, we can highlight properties of the graph such as Eulerianity, matching, cycles, and so on.


\section{Basic defintions and results}

\subsection{Graphs}
 We define a graph $\Gamma=(V; E; \epsilon)$ as follows:
\begin{itemize}
\item $V$ is the set of vertices and $E$ is the set of edges.
\item $\epsilon$ is a map from $E$ to $P_{2}(V)$, where $P_{2}(V)$ is the set of subsets of $V$ having $1$ or $2$ elements.
\end{itemize}

In this paper graphs are finite, i.e., sets $V$ and $E$ have
finite cardinalities. For each edge $a$, we denote $\epsilon (a) =
[x; y]$ if $\epsilon (a) =\{x, y\}$ with $x\neq y$ or $\epsilon
(a) =\{x\}$ if $x = y$, in this case  $a$ is a  {\bf loop}. The
elements $x, y$ are called extremities of $a$, and $a$ is incident
to $x$ and $y$.
The set $\{a\in E, \epsilon(a) = [x; y]\}$ is called {\bf multi-edge} or {\bf $p$-edge}, where $p$ is the cardinality of the set, if $p=1$ and there is no loop, the graph is {\bf simple}.
We define the {\bf degree} of $x$ by $d(x) = card(\{a\in E, x\in \epsilon (a) \})$.\\
Given a graph $\Gamma=(V; E; \epsilon)$, a {\bf chain}, noted $L_{k}$ is a
non-empty graph $P =(V, E, \epsilon)$ with $V = \{x_0, x_1, \ldots
, x_k\}$ and $E = \{a_1,a_2, \ldots ,a_{k-1} a_k\}$, where
$x_{i},x_{i+1}$, ($i \mod k$) are extremities of $a_{i}$. The
elements of $E$ must be distinct. The cardinality of $E$, $k$  is the
{\bf length} of this chain.
A {\bf  cycle}, noted $C_{k}$ is a chain such that $x_0= x_k$.  A graph is {\bf Eulerian}  if there exists a cycle that passes through all the vertices exactly once.\\
A graph is {\bf connected} if, for all $x, y\in V$, there exists a chain from $x$ to $y$.\\
$\Gamma^{'} = (V'; E'; \epsilon^{'})$ is a {\bf subgraph} of $\Gamma$ if it is a graph satisfying
$V' \subseteq V$, $E' \subseteq E$ and $\epsilon^{'}$ is the restriction from $\epsilon$ to $E'$.
If $V' = V$ then $\Gamma^{'}$ is a {\bf spanning subgraph}.\\
An {\bf induced subgraph} generated by $A$, $\Gamma(A)=(A; U; \epsilon)$, with $A \subseteq V$ and $U \subseteq E$
is a subgraph such as $U = \{ a\in E, \epsilon (a) = [x; y], x, y \in A \}$.\\
An induced subgraph such that any pair of vertices are adjacent is
called a {\bf clique}. Let $\Gamma=(V; E; \epsilon)$ be a graph,
a {\bf component} of $\Gamma$ is a maximal connected induced
subgraph.\\
Let $\Gamma_{1} =(V_{1}; E_{1}; \epsilon_{1})$ and $\Gamma_{2}
=(V_{2}; E_{2}; \epsilon_{2})$ be two graphs, a {\bf morphism}
from $\Gamma_{1} =(V_{1}; E_{1}; \epsilon_{1})$ to $\Gamma_{2}
=(V_{2}; E_{2}; \epsilon_{2})$ is a couple $(f, f^{\#})$ where
$f:\; V_{1}\;\longrightarrow\; V_{2}$ is a map and $f^{\#}:\;
E_{1}\;\longrightarrow\; E_{2}$ is a map such that:
\[ \text{if}\; \epsilon_{1} (a) = [x; y]\; \text{then}\; \epsilon_{2} (f^{\#}(a)) = [f(x); f(y)]. \]
So $(id_{V}, id_{E})$ is a morphism from $G =(V; E; \epsilon)$ to $G$.\\
The {\bf composition} of two morphisms $(f, f^{\#})$ and $(g,
g^{\#})$ is defined by: $(f, f^{\#})\circ (g, g^{\#}):= (f \circ
g, f^{\#}\circ g^{\#})$. $(f, f^{\#})$ is an {\bf isomorphism} if
there exists a morphism $(g, g^{\#})$ from $\Gamma_{2} =(V_{2};
E_{2}; \epsilon_{2})$ to $\Gamma_{1} =(V_{1}; E_{1};
\epsilon_{1})$ such that $(g, g^{\#})\circ (f, f^{\#})=
(id_{V_{1}}, id^{\#}_{E_{1}})$ and
$(f, f^{\#})\circ (g, g^{\#}) = (id_{V_{2}}, id^{\#}_{E_{2}})$.
In this case we will denote $(g, g^{\#}) = (f, f^{\#})^{-1}$. So $(f, f^{\#})$ is an
isomorphism if and only if $f$ is a bijection and $f^{\#}$ is a bijection.
If there exists an isomorphism between $\Gamma_{1}$ and $\Gamma_{2}$ we will denote $\Gamma_{1}\simeq \Gamma_{2}$ and
we will say that $\Gamma_{1}$ is {\bf isomorphic} to $\Gamma_{2}$.\\
A {\bf matching} in a graph is a set of edges without common vertices.\\
A {\bf clique} in a graph $\Gamma$  is a subset of vertices  such that every two distinct vertices are adjacent.


\subsection{Representation}
Let  $G$  be a finite group, 
and $k$ a fixed field; a  finite {\bf linear $k$-representation} of $G$ is  a group-morphism 
\[
\rho \colon G \to \mathrm{GL}(V)
\]
where $V$ is a finite dimensional $k$-vector space, and  $GL(V)=Aut_k(V)$ the  linear group of $V$; the dimension of $V$ is the {\bf degree} of $\rho$; one also say that $V$ is a {\bf representation} of $G$.\\

Let $\rho \colon G \to \mathrm{GL}(V),\; \rho' \colon G \to \mathrm{GL}(W)$ be two representations of $G$;
 $\rho$ and $\rho'$ are {\bf  equivalent} if there exists a $k$-linear isomorphism $\alpha: V\longrightarrow W$ verifying:
\[
\alpha\circ\rho= \rho'\circ\alpha
\]
that is to say that
\[
\alpha(\rho(g))(x)= \rho'(g)(\alpha(x))\; \textrm{for every}\;  g\in G\; \textrm{and every}\; x\in V.
\]
The {\bf character} of $\rho$ is the (central) function $\chi_{\rho}$ defined by

$\begin{array}{ccccl}
&\chi_{\rho}(g) : & G & \longrightarrow  & k\\
 & & g & \longmapsto  &\chi_{\rho}(g)= \Tr(\rho(g))\\
\end{array}$\\

\noindent Clearly two equivalent representations have the same character. As is well known the converse is true if  $k= \mathbb{C}$, that is, when $car(k)=0$; when $car(k)=p $ the situation is more delicate.\\\\

Let $k[G]$ be the  {\bf group algebra} over the field $k$:

\[
 k[G]= \left\{\sum_{g\in G} a_{g}g: a_{g}\in k\right\}
\]
$G$ is considered as a basis of $k[G]$.\\\\
If $V$ is a representation of $G$, the action of $G$ extends by $k$-linearity  to an action of $k[G]$: $V$ becomes a $k[G]$-left module.\\
Conversely, every $k[G]$-left module (of finite dimension over $k$)  defines a linear representation of $G$: these two languages are useful. 
In particular $\rho$ is equivalent to $\rho'$ iff the $k[G]$-modules associated are isomorphic.\\

A $k[G]$-module $M$ is {\bf simple} if the only submodules of $M$ are $\{0\}$ and $M$; the associated representation is {\bf irreducible}.\\
When $k$ is algebraically closed and $G$ is abelian then every irreducible representation has dimension $1$. \\
For more information on group representations see \cite{weintraub,serre1,serre2,gorenstein,alperin}.


\subsection{Dual}
Let $G$ be an abelian group of exponent $u$, that is $g^u=1$ for every $g \in G$, and fix 
 $C_u$ is a cyclic group with $u$ elements (cf. \cite{lang} p. 50):  $X(G)=Hom(G,C_u)$ is the "dual group" of $G$.\\

In our text we will use the group $(\mathbb{Z}/2\mathbb{Z})^m$ denoted by $G=\mathbb{E}_m$: here $u=2$ and choose $C_u=\{\pm 1 \}$.  The law  is noted $\oplus$: $\mathbb{E}$ is also a $\mathbb{F}_2$-vector-space of dimension $m$ with canonical basis $E=\{e_1, \ldots , e_m\}$ so that the elements have a unique expression 
${\displaystyle   a= \oplus_{1 \le i \le m} \alpha_i e_i}$ with $\alpha_i= 0, 1$.\\
In these conditions we have an explicit isomorphism:

\begin{prop}\mbox{}\\
\begin{itemize}
\item[i)]  ${\displaystyle \psi_E : \mathbb{E} \longrightarrow X(\mathbb{E})}$ defined, for $a =\sum_{e \in E}a_e e$ by:

\[
\psi_E(a)=\chi_a : \;\; \chi_a(e)= (-1)^{a_e}\; \textrm{where}\; a_{e}=0\; \textrm{if}\; e\not\in \sigma(a); \;  a_{e}=1\; \textrm{otherwise}, 
\]

is an isomorphism; 
\item[ii)] $\psi^{-1}_{E} : X(\mathbb{E}) \longrightarrow \mathbb{E}$ is the following : 
$$\psi^{-1}_{E}( \chi_{a})= a= \oplus_{e \in E} a_e e; \;\; a_{e}=0\; \textrm{or}\; 1, \; \textrm{where} \;a_e\;\textrm{is defined by} \; \chi(e)=(-1)^{a_e}.$$
\end{itemize}
\end{prop}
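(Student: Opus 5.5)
The plan is to interpret $\chi_a$ as the unique character of $\mathbb{E}$ determined by its values on the basis $E$. Then I will check, in order, that $\psi_E$ is well defined, that it is a homomorphism, that it is injective, and finally (by counting) that it is bijective with the stated inverse.

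\textbf{Well-definedness.} Since $\mathbb{E}$ is an $\mathbb{F}_2$-vector space with basis $E$, and $C_2=\{\pm1\}$ is a group of exponent $2$, any assignment $E\to\{\pm1\}$ extends uniquely to a group morphism $\mathbb{E}\to\{\pm1\}$. Concretely, for $b=\oplus_{e}b_e e$ we set
\[
\chi_a(b)=\prod_{e\in E}(-1)^{a_e b_e}=(-1)^{\sum_e a_e b_e}.
\]
This is a morphism because the exponent is $\mathbb{F}_2$-bilinear and $(-1)^t$ depends only on $t \bmod 2$. Here $\sigma(a)$ is read as the support $\{e : a_e=1\}$, which is consistent with the definition of $a_e$ in the statement.

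\textbf{Homomorphism.} For $a,a'\in\mathbb{E}$ we have $(a\oplus a')_e\equiv a_e+a'_e \pmod 2$. Hence $\chi_{a\oplus a'}(e)=\chi_a(e)\chi_{a'}(e)$ on the basis $E$. Two characters that agree on a generating set are equal, so $\psi_E(a\oplus a')=\psi_E(a)\psi_E(a')$.

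\textbf{Injectivity and surjectivity.} If $\chi_a$ is trivial, then $(-1)^{a_e}=1$ for all $e$. Since $p$ is odd, or more simply since $-1\neq 1$ in $\{\pm1\}\subset\mathbb{C}$, this forces $a_e=0$ for all $e$, so $a=0$. Both groups have order $2^m$: a character is determined by its values on the $m$ basis elements, each value lying in $\{\pm1\}$, and every such choice occurs by the extension property above. An injective map between finite sets of equal size is bijective, which proves i). Alternatively, surjectivity can be seen directly: given $\chi\in X(\mathbb{E})$, define $a_e$ by $\chi(e)=(-1)^{a_e}$; then $\chi_a$ and $\chi$ agree on $E$, hence everywhere.

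\textbf{The inverse.} The construction in the surjectivity argument is exactly the map of ii). Checking the two compositions on the basis gives $\psi_E^{-1}\circ\psi_E=\mathrm{id}$ and $\psi_E\circ\psi_E^{-1}=\mathrm{id}$. The only point needing care is that $a_e$ is uniquely determined by $\chi(e)$, which holds because $1\neq-1$.

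No step is a genuine obstacle. The one place to be careful is making precise that $\chi_a$, given in the statement only on $E$, extends uniquely to a morphism on all of $\mathbb{E}$; the explicit formula $(-1)^{\sum_e a_e b_e}$ settles this.
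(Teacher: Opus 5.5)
Your proposal is correct and follows the paper's route: show $\psi_E$ is a morphism, get injectivity from $\chi_a$ trivial $\Rightarrow a=0$, conclude bijectivity from $|\mathbb{E}|=|X(\mathbb{E})|$, and identify the inverse as $\chi\mapsto\oplus_e x_e e$ with $\chi(e)=(-1)^{x_e}$. Your extra steps only make the same argument self-contained: you make the extension of $\chi_a$ from $E$ to $\mathbb{E}$ explicit via $(-1)^{\sum_e a_e b_e}$, and you count $|X(\mathbb{E})|=2^m$ directly rather than citing $|G|=|X(G)|$.
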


\begin{proof}\mbox{}\\
\begin{itemize}
\item[i)] It is easy, by applying the definition, to prove  that $\psi_E$ is a morphism; this morphism is injective since $\chi_a=1$ implies $a_e=0$ for all $e$, therefore $a=0$.\\ Since the groups are finite, and moreover it is well known that: $\vert G\vert= \vert X(G)\vert$, we conclude that: $\psi_E$ is an isomorphism;
\item[ii)] 

Let $\varphi(\chi)= \oplus x_e e,$ where $x_e=0$ or  $x_e=1$, and defined by $\chi(e)=(-1)^{x_e}$. \\
Then $ \varphi \circ \psi_E(a)=\varphi(\chi_a)=\oplus x_e e,$ where $\chi_a(e)=(-1)^{x_e}$ ; but $\chi_a(e)=(-1)^{a_e}$ so that $x_e=a_e$ for all $e$ and $\oplus x_e e =a$: hence
$\varphi \circ \psi_E=Id_{\mathbb{E}}$ and $\varphi= \psi_E^{-1}.$ \qedhere
\end{itemize}
\end{proof}

In the following $\mathbb{F}_q$,  $q$ prime, design the  \textsc{Galois} field with $q$ elements.


\section{$\overline{\mathbb{F}}_p$-representation of  a graph }
We fix an odd prime $p$ and we choose $k= \overline{F}_p$ an algebraic closure of $\mathbb{F}_p$.\\

Let $\Gamma=(V; E, \varepsilon)$ be a simple graph, with $n =\vert V\vert$,  $m =\vert E\vert$,   $m \ge 1$. 
\begin{itemize}
\item[-] To the set of edges $E$, we associate:

$$\mathbb{E}(\Gamma)=\mathbb{E}= \mathbb{F}_2[E].$$

An element of $\mathbb{E}$ can be written
 $$a=\oplus_{e \in E}a_e e, \; a_e \in \mathbb{F}_2$$

the element  $\sigma(a)=\{ e: a_e \neq 0 \}$ is the {\bf support}  of $a$.

 \item[-] To the set of vertices $V$ we associate   
$$\mathbb{V}(\Gamma)=  \mathbb{V}= \overline{\mathbb{F}}_p[V]$$
An element of $\mathbb{V}$ is written 
$ \xi=\sum_{x \in V}\lambda_x x, \;\;  \lambda_x \in \overline{\mathbb{F}}_p $: $\mathbb{V}$ is the $n=|V|$-dimensional
$\overline{\mathbb{F}}_p$-vector space  with $V$ as  basis.\\\\
For the dual group $X(\mathbb{E})=Hom( \mathbb{E}, C_u)$ we choose $C_u=\{\pm1\}$ the  subgroup of  $\mathbb{F}_p^{\times}$ (recall that $p$ is odd).
 \item[-] Let's define the following function:
\begin{equation}\label{gamma}
\begin{array}{ccccc}
\gamma & : & V  & \longrightarrow & \mathbb{E}\\
              &   &  x & \longmapsto      & \underset{\varepsilon(e) \ni x}{\oplus}e 
\end{array}
\end{equation}
hence,  $\gamma(x)=0$ if $x$ is isolated.
 \item[-]   To $\gamma(x)$ we associate the homomorphism $\psi_E(\gamma(x))= \chi_{\gamma(x)} \in X(\mathbb{E})$:
 $$\chi_{\gamma(x)}: \mathbb{E} \longrightarrow \{\pm 1\} \subset \mathbb{F}_p^{\times}$$
which is defined on the basis $E$ by $\chi_{\gamma(x)} (e)= (-1)^{\vert\sigma(\gamma(x))\cap e\vert}$, that is:

\begin{equation}\label{carac}
\chi_{\gamma(x)}(e) =
\begin{cases}
-1 & \text{if }x \in \varepsilon(e)\\
\;\;\; 1& \text{otherwise}.
\end{cases}
\end{equation}

For $u \in \mathbb{E}$  with support $\sigma(u)$, we denote $\Gamma(u)$  the subgraph induced by the edges $e \in \sigma(u)$, i.e. $\Gamma(\sigma(u))$.\\

Let 
\[
a=\oplus_{e \in E}a_e e, \; a_e \in \mathbb{F}_2\; \textrm{and } u=\oplus_{e \in E}b_e e, \; b_e \in \mathbb{F}_2; 
\]
we have: 
$$
 \chi_{a}(u)=\chi_{a}(\underset{e \in \sigma(u)}{\oplus}b_{e}e)\\
= \prod_{e \in \sigma(u)}\chi_{a}(e)=\prod_{e \in \sigma(u)\cap \sigma(a)} (-1)^{a_{e}}
= (-1)^{\vert \sigma(u)\cap \sigma(a)\vert}.
$$

It implies that 
\begin{equation}\label{ki}
 \chi_{\gamma(x)}(u)= (-1)^{\vert \sigma(a)\cap \gamma(x)\vert}= (-1)^{deg_{\Gamma(u)}(x)}.
\end{equation}

\item[-]  Now, we define: 
\begin{equation}\label{rho}
 \rho_{\Gamma}: \mathbb{E} \longrightarrow Aut(\mathbb{V})=GL(\mathbb{V}) 
 \end{equation}
by $\rho_{\Gamma}(a)(x)=\chi_{\gamma(x)}(a)x,\;   \textrm{where}\;   a\in \mathbb{E}\; \textrm{and}\;   x \in V.$ \\
Hence,  $\rho_{\Gamma}(a)$ extends to a  $\overline{\mathbb{F}}_p$-linear endomorphism of $\mathbb{V}$, and actually 
$\rho_{\Gamma}(a) \in Aut(\mathbb{V})$, since $\rho_{\Gamma}(a)^2=Id_{\mathbb{V}}$. \\\\

The  representation  {\bf $\rho_{\Gamma}$ is  the $\overline{\mathbb{F}}_p$-representation of $\mathbb{E}$ associated to $\Gamma$}. Its  degree is $n=|V|= dim_{\overline{\mathbb{F}}_p} (\mathbb{V})$.
\end{itemize}


\section{$R$-module associated to a simple graph}
We define now: 
$$ R= \overline{\mathbb{F}}_p[\mathbb{E}], $$
the group algebra of $\mathbb{E}$ (recall that we have chosen  $k=\overline{\mathbb{F}}_p$).\\
So $\mathbb{V}=\mathbb{V}(\Gamma)$ becomes an $R$-module, where the  external law is given by
\begin{equation}
r.w=\rho_{\Gamma}(r)(w).
\end{equation}

Here ${\displaystyle r\in R= \overline{\mathbb{F}}_p[\mathbb{E}]}$ is written: 
$${\displaystyle r=\oplus_{a\in \mathbb{E}}r_a.a , \; \; r_a \in \overline{\mathbb{F}}_p};$$
if ${\displaystyle s = \oplus_{a\in \mathbb{E}}s_a.a}$, then 
$${\displaystyle r\oplus s=\oplus_{c \in \mathbb{E}}(\oplus_{a\oplus b=c} r_a.s_b).c}$$
and for ${\displaystyle \lambda \in \overline{\mathbb{F}}_p: \lambda r =\oplus \lambda r_a.a}$: vector-space structure of $R$. \\
The ring $R$ is of characteristic $p$. \\


\section{Canonical decomposition of the $R$-module $\mathbb{V}(\Gamma)$}\label{decomposition}

We start with $\mathbb{V}(\Gamma)= \bigoplus_{x \in V} \overline{\mathbb{F}}_p x$. The set $\mathbb{V}(\Gamma)$ is an  $R$-module: $a.x=\rho_{\Gamma_m}(a)(x)=\chi_{\gamma(x)}(a)x$, $x \in V$; moreover,  $\overline{\mathbb{F}}_p x$ is a subrepresentation of $\rho_{\Gamma}$: the action of $e \in E$ is given by $e.(\lambda x)=\lambda (e.x)=\lambda (\pm x) \in  \overline{\mathbb{F}}_px$; being of dimension $1$ we see that $ \overline{\mathbb{F}}_px$ is irreducible, that is a simple $R$-module. We want to specify simple $R$-modules which are ideals of the ring $R$; for this we define for $u\in \mathbb{E}$:
$$
\xi_u= \sum_{a \in \mathbb{E}}\chi_u(a)a \in  \overline{\mathbb{F}}_p[\mathbb{E}]=R.
$$


\begin{lem}\label{idealPrincipal} \mbox{}
\begin{itemize}
\item[i)] For $b \in \mathbb{E}$: $\xi_u \oplus b = \chi_u(b) \xi_u$.
\item[ii)] $ \overline{\mathbb{F}}_p \xi_u= \xi_u \oplus R= (\xi_u)$ is a principal ideal of $R$ and a simple $R$-module.\\
Its character is $\chi_u$.
\end{itemize}
\end{lem}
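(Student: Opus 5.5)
The plan is a direct computation in $R=\overline{\mathbb{F}}_p[\mathbb{E}]$, using only that $\chi_u$ is a homomorphism $\mathbb{E}\to\{\pm1\}$ (Proposition on $\psi_E$) and that its values satisfy $\chi_u(b)^2=1$. Throughout, the symbol $\oplus$ between $\xi_u$ and $b$ denotes the product in $R$, as in the paper's notation.

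For i), I expand
\[
\xi_u\oplus b=\sum_{a\in\mathbb{E}}\chi_u(a)\,(a\oplus b).
\]
I then reindex by $c=a\oplus b$, a bijection of $\mathbb{E}$, so $a=c\oplus b$ since every element has order $2$. Multiplicativity gives
\[
\chi_u(c\oplus b)=\chi_u(c)\chi_u(b),
\]
and therefore
\[
\xi_u\oplus b=\chi_u(b)\sum_{c}\chi_u(c)c=\chi_u(b)\xi_u.
\]
This step is routine.

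For ii), I first extend i) by linearity. For $r=\sum_b r_b b\in R$ this gives
\[
\xi_u\oplus r=\Bigl(\sum_b r_b\chi_u(b)\Bigr)\xi_u\in\overline{\mathbb{F}}_p\xi_u .
\]
Hence $\xi_u\oplus R\subseteq \overline{\mathbb{F}}_p\xi_u$. Conversely, $\lambda\xi_u=\xi_u\oplus(\lambda\cdot 0_{\mathbb{E}})$, where $0_{\mathbb{E}}$ is the unit of $R$, so the two sets are equal. Since $R$ is commutative ($\mathbb{E}$ is abelian), $\xi_u\oplus R$ is the principal ideal $(\xi_u)$.

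Next, $\xi_u\neq 0$, because its coefficient on $0_{\mathbb{E}}$ is $\chi_u(0)=1$ and $\mathbb{E}$ is a basis of $R$. So $(\xi_u)$ is one-dimensional over $\overline{\mathbb{F}}_p$. A one-dimensional module has no subspaces other than $0$ and itself, so it is a simple $R$-module.

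For the character, take $b\in\mathbb{E}$. Multiplication by $b$ acts on the basis vector $\xi_u$ as the scalar $\chi_u(b)$ by i). The trace of this $1\times 1$ matrix is therefore $\chi_u(b)$, so the character of $(\xi_u)$ is $\chi_u$, viewed in $\overline{\mathbb{F}}_p$ via $\{\pm1\}\subset\mathbb{F}_p^\times$.

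There is no real obstacle. The only points needing care are the reindexing $a\mapsto a\oplus b$ in i), and noting that $\xi_u\neq 0$ even in characteristic $p$. The latter holds because the coefficients of $\xi_u$ are $\pm1\neq 0$, as $p$ is odd.
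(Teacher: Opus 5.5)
Your proof is correct and follows the paper's route: for i), the reindexing $c=a\oplus b$ together with multiplicativity of $\chi_u$; for ii), extending i) linearly to get $\xi_u\oplus R=\overline{\mathbb{F}}_p\xi_u$, on which each $b$ acts by the scalar $\chi_u(b)$. You also supply details the paper leaves implicit, namely that $\xi_u\neq 0$ (its coefficient at $0$ is $\chi_u(0)=1$, which is nonzero in any field, so the appeal to $p$ odd is not needed there) and that one-dimensionality gives simplicity.
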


\begin{proof}\mbox{}
\begin{itemize}
\item[i)]  for all $b \in \mathbb{E}:\xi_u \oplus b=\underset{a \in \mathbb{E}}{\oplus}\chi_u(a)a\oplus b$; we proceed with the following variable change: $c=a\oplus b$;
since $b \in \mathbb{E}$ we obtain: $c\oplus b= a$; thus, 
\[
\xi_u\oplus b=\oplus_{c} \chi_u(c\oplus b)c=\chi_u(b) \oplus_c\chi_u(c) c=\chi_u(b)\xi_u.
\]
\item[ii)]  As consequence of i) we have $\overline{\mathbb{F}}_p\xi_u= \xi_u \oplus R= (\xi_u)$,\\
and the representation associated to $(\xi_u)$ is given by $a \longmapsto \rho(a),$ 
where $\rho(a)(\xi_u)= \chi_u(a)\xi_u$; so, his character is $\chi_u$.\qedhere
\end{itemize}
 \end{proof}


\begin{prop}
Let $\Gamma =(V, E, \varepsilon)$ be a simple graph. Then for $x \in V:$
\[
\overline{\mathbb{F}}_px \simeq \xi_{\gamma(x)} \oplus R= \left( \xi_{\gamma(x)}\right)
\]
as simple $R$-modules, with character $\chi_{\gamma(x)}$.
\end{prop}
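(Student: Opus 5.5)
The plan is to write down an explicit $\overline{\mathbb{F}}_p$-linear map between the two one-dimensional spaces and check that it intertwines the actions of $\mathbb{E}$. Both modules are one-dimensional over $\overline{\mathbb{F}}_p$, and an $R$-module structure on a one-dimensional space is determined by the scalars by which the basis elements $a\in\mathbb{E}$ act. So it is enough to compare these scalars.

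First we record the action on $\overline{\mathbb{F}}_p x$. By the definition \eqref{rho} of $\rho_\Gamma$, every $a\in\mathbb{E}$ acts by $a.x=\chi_{\gamma(x)}(a)\,x$. Hence $\overline{\mathbb{F}}_p x$ is a subrepresentation, and its character is $a\mapsto\chi_{\gamma(x)}(a)$. Being one-dimensional, it is a simple $R$-module, as observed at the beginning of Section~\ref{decomposition}.

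Second we record the action on $(\xi_{\gamma(x)})$. Apply Lemma~\ref{idealPrincipal} with $u=\gamma(x)$. Part ii) gives $(\xi_{\gamma(x)})=\overline{\mathbb{F}}_p\,\xi_{\gamma(x)}=\xi_{\gamma(x)}\oplus R$, and this is a simple $R$-module. Part i) gives $b\cdot\xi_{\gamma(x)}=\chi_{\gamma(x)}(b)\,\xi_{\gamma(x)}$ for every $b\in\mathbb{E}$, since $R$ is commutative and left and right multiplication agree.

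Third we define the isomorphism $\alpha:\overline{\mathbb{F}}_p x\to(\xi_{\gamma(x)})$ by $\alpha(\lambda x)=\lambda\,\xi_{\gamma(x)}$. It is $\overline{\mathbb{F}}_p$-linear, and it intertwines the actions because
\[
\alpha(a.x)=\chi_{\gamma(x)}(a)\,\xi_{\gamma(x)}=a\cdot\alpha(x).
\]
Extending by linearity, $\alpha$ is $R$-linear. For $\alpha$ to be an isomorphism we need $\xi_{\gamma(x)}\neq0$. This holds because its coefficient on the basis element $0\in\mathbb{E}$ is $\chi_{\gamma(x)}(0)=1\neq0$ in $\overline{\mathbb{F}}_p$. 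The character statement then follows, since both modules have character $\chi_{\gamma(x)}$.

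The only real subtlety is making sure the two one-dimensional modules carry literally the same scalar action. One point to check is that the values $\pm1$ are taken in $\mathbb{F}_p^\times$, where $p$ odd ensures $-1\neq1$; this is not strictly needed for the isomorphism itself. The other is that $\xi_u$ is nonzero, which makes $(\xi_u)$ genuinely one-dimensional. Everything else is a direct consequence of Lemma~\ref{idealPrincipal}.
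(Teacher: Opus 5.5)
Your proposal is correct and matches the paper's proof: the same map $\lambda x\mapsto\lambda\,\xi_{\gamma(x)}$, with the intertwining checked on $b\in\mathbb{E}$ via Lemma~\ref{idealPrincipal}~i). Your remark that $\xi_{\gamma(x)}\neq 0$, since its coefficient on $0\in\mathbb{E}$ is $1$, usefully justifies the bijectivity that the paper only asserts.
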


\begin{proof} Define the following function:
$$
\begin{array}{cccl}
f : &  \overline{\mathbb{F}}_px& \longrightarrow &  \xi_{\gamma(x)}\oplus R=  \overline{\mathbb{F}}_p\xi_{\gamma(x)} \\
  & \lambda x & \longmapsto& f(\lambda x)= \lambda \xi_{\gamma(x)}  \\
\end{array}
$$

$f$ is a $\overline{\mathbb{F}}_p$-linear bijection; and a morphism of $R$-modules: for $r=b \in \mathbb{E}$:
\begin{itemize}
\item[-] $f(b. \lambda x)=f(\lambda b .x)= f(\lambda \chi_{\gamma(x)}(b) x)=\chi_{\gamma(x)}(b)\lambda \xi_{\gamma(x)}$;
\item[-] $b.f(\lambda x)=b. \lambda \xi_{\gamma(x)}=\lambda \xi_{\gamma(x)}\oplus b =\lambda\chi_{\gamma(x)}(b)\xi_{\gamma(x)}$ by  Lemma \ref{idealPrincipal} ; so $f$ is an isomorphism of modules.\qedhere
\end{itemize}
\end{proof}

Consequently, we get 
\begin{equation}
\mathbb{V}(\Gamma)\simeq \bigoplus_{x \in V} \left(\xi_{\gamma(x)}\oplus R\right).
\end{equation}

We are looking for the isomorphic components of this decomposition; for this we define 
\begin{eqnarray*}
I_V                    &=& \{x \in V : x \textrm{\;isolated vertex} \} ,\; \vert I_V \vert =m_{is},\\
I_{E}&=& \{e \in E : e \textrm{\;isolated edge} \},\\
V_{\gamma}    &= & V \setminus \left\{I_V \cup \varepsilon (I_{E})\right\}.
\end{eqnarray*}
From this, we get: $\gamma(x) = 0 $ on $I_V$, and if $\varepsilon(e)=\{x, y\}$ is an isolated edge, then $\gamma(x)=\gamma(y)$.


\begin{thm}\label{canondecomposition}\textrm{} \\
Let $\Gamma=(V;E, \varepsilon)$ be a simple  graph; then the  splitting of $\mathbb{V}(\Gamma)$ as an $R$-module 
\begin{equation}
\mathbb{V}(\Gamma) \simeq m_{is} \left(\xi_{0}\oplus R\right)\bigoplus_{e\in I_{E}} 2 \left(\xi_{e}\oplus R\right) \bigoplus_{x \in V_{\gamma}} \left(\xi_{\gamma(x)}\oplus R\right),
\end{equation}
is a decomposition into simple non isomorphic $R$-modules, witch are ideals of $R$.
\end{thm}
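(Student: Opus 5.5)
The plan is to start from the decomposition already obtained,
\[
\mathbb{V}(\Gamma)\simeq \bigoplus_{x \in V} \left(\xi_{\gamma(x)}\oplus R\right),
\]
and to regroup its summands according to the value of $\gamma(x)$. By the preceding Proposition each summand is a one-dimensional simple $R$-module with character $\chi_{\gamma(x)}$. By Lemma \ref{idealPrincipal} each summand is the principal ideal $(\xi_{\gamma(x)})$ of $R$, which settles the claim that the components are ideals.

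The vertex set splits as the disjoint union $V = I_V \cup \varepsilon(I_E) \cup V_\gamma$. For $x\in I_V$ we have $\gamma(x)=0$, which gives $m_{is}$ copies of $(\xi_0)$. For an isolated edge $e$ with $\varepsilon(e)=\{x,y\}$, the vertices $x$ and $y$ are incident only to $e$, so $\gamma(x)=\gamma(y)=e$; this gives the term $2(\xi_e)$ for each $e\in I_E$. The remaining vertices $x\in V_\gamma$ each contribute one copy of $(\xi_{\gamma(x)})$, which yields exactly the displayed splitting.

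Next I will show that the distinct listed types are pairwise non-isomorphic. Two one-dimensional modules $(\xi_u)$ and $(\xi_v)$ are isomorphic if and only if they have the same character. This holds because an isomorphism between lines intertwines the scalar actions $\chi_u(a)$ and $\chi_v(a)$, so $\chi_u=\chi_v$. Since $\psi_E$ is an isomorphism, $\chi_u=\chi_v$ forces $u=v$. So it suffices to prove three facts about $\gamma$:
\begin{enumerate}[label=(\alph*)]
\item $\gamma(x)\neq 0$ for $x\in V_\gamma$;
\item $\gamma(x)\neq e$ for every $x\in V_\gamma$ and every $e\in I_E$;
\item $\gamma$ is injective on $V_\gamma$.
\end{enumerate}
The isolated-edge elements $e\in I_E$ are distinct and nonzero, so they are pairwise different and different from $0$.

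Fact (a) holds because a vertex of $V_\gamma$ is not isolated, so its support $\sigma(\gamma(x))$ is nonempty. For fact (b), suppose $\gamma(x)=e$ with $e \in I_E$. Then $e$ is the only edge at $x$, so $x\in\varepsilon(e)\subseteq \varepsilon(I_E)$, contradicting $x\in V_\gamma$.

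Fact (c) is the main point. Suppose $x\neq y$ in $V_\gamma$ with $\gamma(x)=\gamma(y)$, so $x$ and $y$ have the same set of incident edges. This set is nonempty, and every edge in it contains both $x$ and $y$. Since $\Gamma$ is simple, the only such edge is the unique edge $\{x,y\}$. Hence $x$ and $y$ both have degree $1$ and are joined by an edge, which is therefore isolated. This contradicts $x,y\notin\varepsilon(I_E)$.

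I expect step (c) to be the only real obstacle. It is exactly where simplicity is used: a multi-edge would break the argument.
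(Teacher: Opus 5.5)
Your proof is correct and follows the paper's argument. You regroup the summands of $\bigoplus_{x\in V}(\xi_{\gamma(x)}\oplus R)$ according to the value of $\gamma$, and you use simplicity to show that $\gamma$ is injective on $V_\gamma$; your single contradiction argument merges the paper's adjacent/non-adjacent case split. You are also slightly more complete than the paper, which leaves implicit your facts (a) and (b) and the fact that $(\xi_u)\simeq(\xi_v)$ forces $u=v$ (via equality of characters and injectivity of $\psi_E$).
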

We call this splitting the {\bf canonical decomposition} of $\mathbb{V}$.

\begin{proof}\mbox{}
\begin{itemize}
\item[$\bullet$] For $x\in I_V$:  we have $\gamma(x) = 0$ and  $\mathbb{F}_p x$ is the unit representation, (or   the trivial representation),  $(\xi_{0}\oplus R)$;
\item[$\bullet$] for  $e \in I_{E}, \varepsilon(e)=\{x, y \}$, we have $ \gamma(x)=\gamma(y)=e$. In this case, 
$$\mathbb{F}_p x\simeq \mathbb{F}_p y \simeq \xi_{\gamma(x)}\oplus R=\xi_{\gamma(y)}\oplus R= \xi_e \oplus R;$$
\item[$\bullet$] we now have to show that $\gamma$ is injective on $V_{\gamma}$ ; let $x\neq y 
\in V_{\gamma}$; note  that there  exists at most one $e \in E$ such that $\varepsilon(e)=\{x, y\}$; 

\begin{itemize}
\item[-] if $x, y$ are not adjacent, then $\gamma(x) \neq \gamma(y)$ and  $ \xi_{\gamma(x)}\oplus R \not\simeq \xi_{\gamma(y)}\oplus R$;
\item[-] if $\{x,y \}=\varepsilon(e)$ by hypothesis $deg(x) \ge 2$ or $deg(y) \ge 2$; if for instance  $deg(x) \ge 2$, there exists $e'\neq e$ incident to $x$, so $e' \in \gamma(x) \setminus \gamma(y)$, and  $ \xi_{\gamma(x)}\oplus R \not\simeq \xi_{\gamma(y)}\oplus R$.
\end{itemize}
\end{itemize}
That concludes the proof.
\end{proof}
It is easy to verify that:
\begin{equation}
n=  m_{is}+ 2m_{I_{E}}+ m_{\gamma},
\end{equation}
where $m_{is}$ is the isolated vertex number; $m_{I_{E}}$ is the isolated edge number and $m_{\gamma}$ is the coefficient of $ \left(\xi_{\gamma(x)}\oplus R\right)$, that is, either $0$ or $1$.


\section{Equivalence of representations}
In order to compare the representations $\rho_{\Gamma}$ of different simple graphs $\Gamma$, we must consider graphs having the same labeling of edges, this will allow us to have the same group:
\[
 \mathbb{E}= \overline{\mathbb{F}}_p[E].
 \]


\subsection{m-graphs and m-isomorphisms} 

Let $\mathcal{E}_m=\{1, 2, \ldots, m \}$; an {\bf $m$-graph} is a graph $\Gamma_{m}=(V, \mathcal{E}_m, \alpha)$: the set of edges is labeled by $\mathcal{E}_m$.\\

\noindent Every  graph $\Gamma=(V;E, \varepsilon)$ with $m$ edges has a "copy''  which is an $m$-graph:
for that define $\varphi =\Id$, and choose   a bijection $\varphi^{\#}: E \longrightarrow \mathcal{E}_m$; define now 
$\Gamma_{m}=(V; \mathcal{E}_m, \alpha) $ by the rule $\alpha(i)= \alpha(\varphi^{\#}(e)):= \varepsilon(e)$; then 
$(\varphi, \varphi^{\#}) : \Gamma \longrightarrow \Gamma_m$ is an isomorphism.\\\\
Let $(f,f^{\#})  : \Gamma_m=(V;\mathcal{E}_m ,\alpha) \longrightarrow \Gamma'_m=(V';\mathcal{E}_m,\alpha')$ be an isomorphism 
($\alpha' \circ f^{\#}=f \circ \alpha$) ; it is called an  {\bf $m$-isomorphism} if  $f^{\#}=\Id: \alpha' =f \circ \alpha$.

\begin{rem} An isomorphism is not necessarily an $m$-isomorphism : for instance,  let 
$\Gamma_m =\Gamma'_m =(V;\mathcal{E}_m, \alpha)$ the graph with $V=\{x, y, z \}, m=2$ and $\alpha(1)=\{x, y\}, 
\alpha(2)=\{y, z \}$ (the connected graph $L_2$ with $2$ edges) then 
$f(x)=z, f(y)=y, f(z)=x, f^{\#}(1)=2, f^{\#}(2)=1$ is an automorphism: $\alpha \circ f^{\#}(1)= \alpha(2)=\{y,z\}=f(\{x,y\})=f\circ \alpha(1),
\alpha \circ f^{\#}(2)= \alpha(1)=\{x,y\}=f(\{y,z\})=f\circ \alpha(2)$, and  not an $m$-automorphism: $f^{\#} \neq Id$.\\
However, for two isomorphic graphs $\Gamma=(V;E, \varepsilon)$ and $\Gamma'=(V';E', \varepsilon')$, we can always find two copies of these graphs that are $m$-isomorphic, this is the subject of the next Proposition
\end{rem}


\begin{prop} \label{m-grahsIso} Let $\Gamma=(V;E,\varepsilon)$ and  $\Gamma'=(V';E', \varepsilon')$ be two simple graphs with $\vert V\vert =\vert V'\vert =n$ and  $\vert E\vert =\vert E'\vert =m$; the two following properties are equivalent:
\begin{enumerate}
\item[i)] the graphs $\Gamma=(V;E,\varepsilon)$ and  $\Gamma'=(V';E', \varepsilon')$ are isomorphic;
\item[ii)]  there exist copies 
 $\Gamma_m=(V; \mathcal{E}_m, \alpha), \Gamma'_m=(V';\mathcal{E}_m,\alpha')$ which are $m$-isomorphic.
\end{enumerate}

\end{prop}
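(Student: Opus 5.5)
Both directions rest on two facts: composing isomorphisms gives an isomorphism, and a copy $\Gamma_m$ of a graph comes with an isomorphism $(\mathrm{Id},\varphi^{\#}):\Gamma\to\Gamma_m$ whose edge part $\varphi^{\#}$ is any bijection $E\to\mathcal{E}_m$ we like. The whole argument is to pick the two edge labelings compatibly.

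\emph{ii) $\Rightarrow$ i).} Let $(\varphi,\varphi^{\#}):\Gamma\to\Gamma_m$ and $(\varphi',\varphi'^{\#}):\Gamma'\to\Gamma'_m$ be the isomorphisms defining the copies, and let $(f,\Id):\Gamma_m\to\Gamma'_m$ be an $m$-isomorphism. Then
\[
(\varphi',\varphi'^{\#})^{-1}\circ(f,\Id)\circ(\varphi,\varphi^{\#}):\Gamma\longrightarrow\Gamma'
\]
is a composite of isomorphisms. Its vertex map and its edge map are composites of bijections, so it is an isomorphism by the characterization at the end of the subsection on graphs.

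\emph{i) $\Rightarrow$ ii).} Let $(g,g^{\#}):\Gamma\to\Gamma'$ be an isomorphism, so that $\varepsilon'(g^{\#}(e))=g(\varepsilon(e))$ for every $e\in E$. The steps are as follows.
\begin{enumerate}
\item Fix an arbitrary bijection $\varphi^{\#}:E\to\mathcal{E}_m$ and build the copy $\Gamma_m=(V;\mathcal{E}_m,\alpha)$ with $\alpha(\varphi^{\#}(e))=\varepsilon(e)$.
\item Transport this labeling through $g^{\#}$: take $\varphi'^{\#}:=\varphi^{\#}\circ(g^{\#})^{-1}:E'\to\mathcal{E}_m$, again a bijection, and build the copy $\Gamma'_m=(V';\mathcal{E}_m,\alpha')$ with $\alpha'(\varphi'^{\#}(e'))=\varepsilon'(e')$.
\item Check that $(g,\Id):\Gamma_m\to\Gamma'_m$ is an $m$-isomorphism, i.e.\ that $\alpha'(i)=g(\alpha(i))$ for every $i\in\mathcal{E}_m$. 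Write $i=\varphi^{\#}(e)$. Then $i=\varphi'^{\#}(g^{\#}(e))$, so
\[
\alpha'(i)=\varepsilon'(g^{\#}(e))=g(\varepsilon(e))=g(\alpha(i)).
\]
Finally, $g$ is a bijection and $\Id$ is a bijection, so $(g,\Id)$ is an isomorphism whose edge component is the identity, as required.
\end{enumerate}

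The main obstacle is the choice in step 2. The labeling of $\Gamma'$ has to be the labeling of $\Gamma$ transported along $g^{\#}$; with independently chosen labelings the edge part of the induced isomorphism would not be the identity, which is exactly the situation of the remark preceding the statement. Once this choice is made, step 3 is a one-line computation. One should also note that the definition of a copy in the paper fixes the vertex map to be $\Id$, which is harmless here.
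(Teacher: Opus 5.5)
Your proposal is correct and follows the paper's proof essentially verbatim. You fix a labeling $\varphi^{\#}$ of $E$, transport it to $E'$ via $\varphi^{\#}\circ(g^{\#})^{-1}$, and verify $\alpha'=g\circ\alpha$ so that $(g,\Id)$ is an $m$-isomorphism; your composite-of-isomorphisms argument for ii) $\Rightarrow$ i) supplies what the paper dismisses as obvious.
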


\begin{proof}
Let $(f,f^{\#} ): \Gamma \longrightarrow \Gamma'$ be  an isomorphism.
\begin{itemize}
\item[-] We choose a bijection $\varphi^{\#}: E \longrightarrow \mathcal{E}_m$ and we define  $\alpha(i)=\alpha(\varphi^{\#}(e)) := \varepsilon(e)$; this gives $\Gamma_m=(V;\mathcal{E}_m,\alpha)$;
\item[-] now $\psi^{\#}:= \varphi^{\#} \circ (f^{\#})^{ -1}: E' \longrightarrow \mathcal{E}_m$ is a bijection; furthermore we define
$\alpha'(i)=\alpha(\psi^{\#}(e')) := \varepsilon'(e')$,  this gives $\Gamma'_m=(V';\mathcal{E}_m,\alpha')$;
\item[-]  then $(f,\Id): \Gamma_m \longrightarrow \Gamma'_m$ is an  $m$-isomorphism, i.e. $\alpha'=f \circ \alpha$:
for $1 \le i \le m: i=\varphi^{\#}(e)$ we have $\alpha(i)=\varepsilon(e), f\circ \alpha(i)=f \circ \varepsilon(e)=
\varepsilon' \circ f^{\#}(e)$; but $f^{\# -1}=\psi^{\# -1} \circ \varphi^{\#}$, so that $f \circ \alpha(i)=
\varepsilon'(\psi^{\#-1}(\varphi^{\#}(e))=\varepsilon'(\psi^{\#-1}(i)$. denote $e':= \psi^{\#-1}(i)$:
$f \circ \alpha(i)= \varepsilon'(e')=\alpha'(\psi^{\#}(e'))=\alpha'(i)$.
\end{itemize}
Consequently,  the following diagram is commutative:

\[ 
\begin{tikzcd}[sep=2cm]
 \Gamma_m= (V;\mathcal{E}_m,\alpha) \arrow{r}{(f,Id)} \arrow[swap]{d}{(Id,\varphi^{\#-1})} &\Gamma'_m= (V'; \mathcal{E}_m,\alpha')  \arrow{d}{(Id,\psi^{\#})} \\%
\Gamma  =(V;E,\varepsilon)  \arrow{r}{(f,f^{\#})}& \Gamma'=(V';E',\varepsilon')
\end{tikzcd}
\]\\
The converse is obvious.
\end{proof}


\subsection{Representations  of $\mathbb{E}_m$ }
We define: 
$$\mathbb{E}_m:=\mathbb{F}_2[\mathcal{E}_m] \;\;\;\;   R_m:= \overline{\mathbb{F}}_p[\mathbb{E}_m]$$

In this context  two  $\overline{\mathbb{F}}_p$-representations  $\rho: \mathbb{E}_m \longrightarrow Aut( W)$ and $\rho' : \mathbb{E}_m \longrightarrow Aut(W')$ of the group $\mathbb{E}_m$ are equivalent, denoted by $\rho \simeq \rho'$,  if there exists $\varphi :W \longrightarrow W'$ a $ \overline{\mathbb{F}}_p$-isomorphism such that for all $a \in {E}_m$
$$\varphi\circ \rho (a)= \rho'(a)\circ \varphi,$$
that is, $\varphi$ is an isomorphism of $R_m$-modules.


\subsection{Excellent field}
The field $\overline{\mathbb{F}}_p$ is an algebraic closure of $\mathbb{F}_p$, moreover p is coprime with the
 order of $\mathbb{E}_m$, therefore $\overline{\mathbb{F}}_p$ is an excellent field for $\mathbb{E}_m$ (see \cite{weintraub} p. 31, p.56); moreover, since $\mathbb{E}_m$ is an abelian group, \textsc{Frobenius}'s Theorem (see \cite{weintraub}  p. 57) states that a complete set of non-isomorphic $R$-simple modules has cardinality $|\mathbb{E}_m|$.\\\\

\noindent  The field $\overline{\mathbb{F}}_p$  being excellent, applying the  Corollary 5.22 (2) p 74  in \cite{weintraub} we obtain:


\begin{thm}\label{frobenius} Let $(W_{\alpha})_{\alpha \in A}$ be a set of non-isomorphic simple $R$-modules; 

\[
\textrm{if}\quad  W=\underset{\alpha \in A}{\oplus}m_{\alpha} W_{\alpha}\;  \textrm{ and }\;  W'=\underset{\alpha \in A}{\oplus}m'_{\alpha}W_{\alpha} 
\]
 are splitting decompositions with $\chi_W=\chi_{W'}$
then
\[
\forall \; \alpha \in A \;\; m_{\alpha} \equiv m'_{\alpha}  \mod p.  
\]
\end{thm}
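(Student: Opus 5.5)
We argue by linear independence of the irreducible characters over $\overline{\mathbb{F}}_p$; this is essentially the content of the cited corollary in \cite{weintraub}, and we spell it out in the present setting. Since characters are additive on direct sums, from the two splittings we get
\[
\chi_W=\sum_{\alpha\in A} m_\alpha\,\chi_{W_\alpha},\qquad \chi_{W'}=\sum_{\alpha\in A} m'_\alpha\,\chi_{W_\alpha},
\]
where the integers $m_\alpha$ are read in $\overline{\mathbb{F}}_p$, i.e.\ through their classes modulo $p$. The hypothesis $\chi_W=\chi_{W'}$ then gives $\sum_{\alpha}(m_\alpha-m'_\alpha)\chi_{W_\alpha}=0$ as functions $\mathbb{E}_m\to\overline{\mathbb{F}}_p$. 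It therefore suffices to show that the characters of pairwise non-isomorphic simple $R$-modules are linearly independent over $\overline{\mathbb{F}}_p$. Then every coefficient $m_\alpha-m'_\alpha$ vanishes in $\overline{\mathbb{F}}_p$, which is exactly $m_\alpha\equiv m'_\alpha \pmod p$.

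For the independence, we use the abelian structure. Since $\overline{\mathbb{F}}_p$ is excellent and $\mathbb{E}_m$ is abelian, each simple module has dimension $1$, so its character is a homomorphism $\mathbb{E}_m\to\{\pm1\}\subset\overline{\mathbb{F}}_p^{\times}$. Two one-dimensional modules are isomorphic exactly when these homomorphisms coincide. So the $\chi_{W_\alpha}$ are pairwise distinct elements $\chi_{u_\alpha}$ of $X(\mathbb{E}_m)$, with $u_\alpha=\psi_E^{-1}(\chi_{W_\alpha})$ by the Proposition on the dual.

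Suppose $\sum_\alpha c_\alpha\chi_{u_\alpha}=0$. For a fixed $\beta$, multiply by $\chi_{u_\beta}$ and sum over $a\in\mathbb{E}_m$. We use the orthogonality relation
\[
\sum_{a\in\mathbb{E}_m}\chi_u(a)\chi_v(a)=\sum_a\chi_{u\oplus v}(a)=2^m\delta_{u,v}.
\]
It holds because for $w\neq 0$ some $b$ has $\chi_w(b)=-1$, and the substitution $a\mapsto a\oplus b$, as in Lemma~\ref{idealPrincipal}, shows that the sum equals its own negative. Since $p$ is odd, this forces the sum to be $0$. The computation yields $2^m c_\beta=0$, and since $p\nmid 2^m$ we conclude $c_\beta=0$.

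The main obstacle is the orthogonality step, which is where the hypotheses are really used. The cancellation argument needs $2$ to be invertible in characteristic $p$, and the final division needs $|\mathbb{E}_m|=2^m$ to be invertible. Both hold because $p$ is odd. Note also that the conclusion cannot be sharpened to $m_\alpha=m'_\alpha$, since characters only see multiplicities modulo $p$; this is why the statement is a congruence.
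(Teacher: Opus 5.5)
Your proof is correct and takes essentially the paper's route: the paper gives no argument of its own and simply invokes Weintraub's Corollary 5.22(2) for the excellent field $\overline{\mathbb{F}}_p$. The content of that corollary is the linear independence of the characters of non-isomorphic simple modules, which you prove directly for $\mathbb{E}_m$ from the orthogonality relation $\sum_{a}\chi_{u\oplus v}(a)=2^m\delta_{u,v}$. What your version adds is that it is self-contained and shows exactly where the oddness of $p$ is used.
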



\subsection{Characters}\mbox{}\\
As consequence of Theorem  \ref{frobenius} we see that irreducible representations with the same character are equivalent; but we know $|\mathbb{E}_m|$  
irreducible representations:
 \[
 (\xi_a \oplus R_m)_{a \in \mathbb{E}_m}
 \]
 so this set is a {\bf complete system} of simple $R_m$-modules made up of ideals of $R_m$.\\

Our canonical decomposition uses precisely this system.


\begin{thm} \label{graphEquivModules}
Let $R_m=\overline{\mathbb{F}}_p[\mathbb{E}_m]$, with  $\mathbb{E}_m=\mathbb{F}_2[\mathcal{E}_m], \; m \ge 2$.\\
 Let  $\Gamma_m=(V;\mathcal{E}_m, \alpha)$ and $\Gamma'_m=(V'; \mathcal{E}_m, \alpha')$ be two simple $m$-graphs. The following properties are equivalent:
\begin{enumerate}
\item[i)] the graph $\Gamma_m$ is $m$-isomorphic to  $\Gamma'_m$;
\item[ii)] the associated $R_m$-modules ${\displaystyle \mathbb{V}(\Gamma_m)}$ and ${\displaystyle \mathbb{V}(\Gamma'_m)}$ are isomorphic;
\item[iii)]  the representations $ \rho_{\Gamma_m}$ and $\rho_{\Gamma'_m}$  are equivalent.
\end{enumerate}
\end{thm}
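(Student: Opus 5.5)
The plan is to handle the three equivalences separately. The equivalence ii)$\Leftrightarrow$iii) is immediate from the dictionary recalled in the subsection on representations of $\mathbb{E}_m$: an equivalence $\varphi\circ\rho_{\Gamma_m}(a)=\rho_{\Gamma'_m}(a)\circ\varphi$ for all $a\in\mathbb{E}_m$ is exactly an isomorphism of $R_m$-modules $\mathbb{V}(\Gamma_m)\to\mathbb{V}(\Gamma'_m)$. So the real content is i)$\Leftrightarrow$ii).

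For i)$\Rightarrow$ii), let $(f,\Id)$ be an $m$-isomorphism, so $\alpha'=f\circ\alpha$. Then for $x\in V$ and $i\in\mathcal{E}_m$ we have $x\in\alpha(i)$ iff $f(x)\in\alpha'(i)$. Hence $\gamma'(f(x))=\gamma(x)$ in $\mathbb{E}_m$, and therefore $\chi_{\gamma'(f(x))}=\chi_{\gamma(x)}$. I will define the $\overline{\mathbb{F}}_p$-linear isomorphism $\Phi:\mathbb{V}(\Gamma_m)\to\mathbb{V}(\Gamma'_m)$ by $x\mapsto f(x)$ on the basis. Then
$$\Phi(a.x)=\chi_{\gamma(x)}(a)f(x)=\chi_{\gamma'(f(x))}(a)f(x)=a.\Phi(x),$$
so $\Phi$ is a module isomorphism.

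For ii)$\Rightarrow$i), I first need to recover the multiset $\{\gamma(x)\}_{x\in V}$ from the module. Theorem \ref{frobenius} only gives multiplicities modulo $p$, which is not enough, so I will avoid it. Instead I use isotypic subspaces: for $u\in\mathbb{E}_m$ put
$$M_u=\{v\in\mathbb{V}(\Gamma_m): a.v=\chi_u(a)v\ \ \forall a\in\mathbb{E}_m\}.$$
Since $V$ is a basis of eigenvectors and the characters $\chi_u$ are pairwise distinct (because $\psi_E$ is an isomorphism), $M_u$ is spanned by $\{x\in V:\gamma(x)=u\}$. Thus $\dim M_u=|\gamma^{-1}(u)|$. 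Any module isomorphism sends $M_u$ onto $M'_u$, so $|\gamma^{-1}(u)|=|\gamma'^{-1}(u)|$ for every $u$. Hence there is a bijection $f:V\to V'$ with $\gamma'(f(x))=\gamma(x)$, chosen arbitrarily inside each fibre; for example, the two ends of an isolated edge may be permuted.

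It remains to check that $\alpha'=f\circ\alpha$; this is the step where simplicity of the graphs is used. For an edge $i$, the set of vertices $x$ with $i\in\sigma(\gamma(x))$ is exactly $\alpha(i)$, which has two elements since there are no loops. Likewise $\{x'\in V': i\in\sigma(\gamma'(x'))\}=\alpha'(i)$. Since $\gamma'\circ f=\gamma$ and $f$ is a bijection, $f$ maps the first set onto the second, so $f(\alpha(i))=\alpha'(i)$. Thus $(f,\Id)$ is an $m$-isomorphism. The main obstacle is the multiplicity-recovery step, and the eigenspace argument above should settle it cleanly.
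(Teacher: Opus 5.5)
Your proposal is correct. For i)$\Rightarrow$ii) and ii)$\Leftrightarrow$iii) you argue exactly as the paper does. The difference is in ii)$\Rightarrow$i).

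The paper's route goes through its canonical decomposition (Theorem~\ref{canondecomposition}). It asserts that isomorphic modules have the same multiplicities $m_a$ of the simple modules $\xi_a\oplus R_m$. This tacitly uses uniqueness of multiplicities in a decomposition of a semisimple module into simples, i.e.\ Krull--Schmidt. It then reads the graph off in three pieces before gluing the partial bijections together:
\begin{itemize}
\item $m_0$ gives the isolated vertices;
\item $\{a: m_a=2\}$ gives the isolated edges;
\item $\{a: m_a=1\}$ gives a bijection on $V_\gamma$.
\end{itemize}

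Your route proves the invariance of the multiplicities directly. You identify $|\gamma^{-1}(u)|$ with the dimension of the simultaneous eigenspace $M_u$, which uses only the injectivity of $\psi_E$ and $1\neq -1$ in $\overline{\mathbb{F}}_p$. You then take an arbitrary fibrewise bijection $f$ with $\gamma'\circ f=\gamma$.

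What each buys: your argument is shorter and self-contained, and it makes the case analysis on isolated vertices and edges unnecessary. In return, the paper's route exhibits the combinatorial meaning of the multiplicities.

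One small correction: simplicity is not actually used in your final step. For any graph, $\sigma(\gamma(x))$ is exactly the set of edges incident to $x$, so $\alpha(i)=\{x: i\in\sigma(\gamma(x))\}$ holds even with loops or multiple edges. The remark that $\alpha(i)$ has two elements is therefore harmless but superfluous. Your argument in fact works verbatim without the simplicity hypothesis, and without $m\ge 2$.
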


\begin{proof}\mbox{}

\begin{itemize}
\item[-] $i)\Longrightarrow $ii) : let $(\varphi, \Id) : \Gamma_m \longrightarrow \Gamma'_m$ be an $m$-isomorphism: $\varphi \circ \alpha=\alpha'$; we have: 
$f(\alpha(e))=\alpha'(\Id(e))= \alpha'(e)$, consequently:
\begin{equation*}
\begin{array}{l}
\left((x\in \alpha(e)\Longleftrightarrow \varphi(x)\in \alpha'(e)\right)\Longleftrightarrow\\
\left(e\in \gamma(x)\Longleftrightarrow  e\in \gamma'(\varphi(x))\right)\Longleftrightarrow\\
 \left(\xi_{\gamma(x)}\oplus R_m\right)\simeq \left(\xi_{\gamma'\varphi(f(x))=x')}\oplus R_m\right).  
\end{array}
\end{equation*}
Since the decompositions of  both $\mathbb{V}(\Gamma)$ and $\mathbb{V}(\Gamma')$  in $\left(\xi_{\gamma(x)}\oplus R_m\right), x\in V$ and  
$\left(\xi_{\gamma'(x')}\oplus R_m\right),$ $\varphi(x)=x'\in V'$ is a bijection $(\varphi; \Id)$  can be extend to a $\overline{\mathbb{F}}_p$-isomorphism 
$\varphi : \mathbb{V}(\Gamma_m) \longrightarrow \mathbb{V}(\Gamma'_m)$.
  
 \noindent We can also see, considering that the above isomorphism is an isomorphism of abelian groups,   that for all $i\in\mathcal{E}_m$ we have:  ${\displaystyle \varphi(\alpha(i))=\alpha'(i)}$, so that
${\displaystyle  x \in \alpha(i) \Longleftrightarrow \varphi(x)\in \; \alpha'(i)}$ : 
${\displaystyle  \gamma(x)=\gamma'(\varphi(x))}$; so ${\displaystyle  \varphi(i.x)=\varphi(\chi_{\gamma(x)}(i)x)=\chi_{\gamma'(\varphi(x))}(i)\varphi(x)=i.\varphi(x)}$ : $\varphi$ is an isomorphism of $R$-modules;

\item[-] ii)$\Longrightarrow$ i) : suppose now that  $\mathbb{V}(\Gamma_m) \simeq \mathbb{V}(\Gamma'_m)$ as $R_m$-modules, they have the same canonical decomposition, of the form 
$$\mathbb{V}(\Gamma)\simeq \bigoplus_{a \in \mathbb{E}_m}m_a (\xi_a\oplus R_m)=
 m_{is} \left(\xi_{0}\oplus R_m\right)\bigoplus_{e\in I_{\mathcal{E}_m}} 2 \left(\xi_{e}\oplus R_m\right) \bigoplus_{x \in V_{\gamma}} \left(\xi_{\gamma(x)}\oplus R_m\right)$$
and
$$\mathbb{V}(\Gamma')\simeq \bigoplus_{a \in \mathbb{E}_m}m'_a (\xi_a\oplus R_m)=
 m'_{is} \left(\xi_{0}\oplus R_m\right)\bigoplus_{e'\in I'_{\mathcal{E}_m}} 2 \left(\xi_{e}\oplus R_m\right) \bigoplus_{x' \in V'_{\gamma'}} \left(\xi_{\gamma'(x')}\oplus R_m\right).$$
So we have $m_a=m'_a$ for all $a$.
\begin{itemize}
\item[-] Firstly the coefficients of $\xi_0 \oplus R_m$ are the same so $m_{is}=m'_{is}: \Gamma$ and $\Gamma'$ have the same number of isolated vertices;
\item[-] Secondly $\{a: m_a=2\}=\{ a: m'_a=2\}$ so that $ I_{\mathcal{E}_m}= I'_{\mathcal{E}_m}$ and  $\Gamma, \Gamma'$ have the same number of isolated edges;
\item[-] Thirdly $\{a: m_a=1\}=\{ a: m'_a=1\}$ and for all  $x \in V_{\gamma}$ there exists an  unique $x' \in
 V'_{\gamma'}$ such that $\xi_{\gamma(x)}=\xi_{\gamma'(x')}$, that is $\gamma(x)
 =\gamma'(x')$ which   gives a bijection: 
\end{itemize}
 \begin{eqnarray*} 
 \varphi  : V_{\gamma}& \longrightarrow V'_{\gamma'}\\
                               x    & \longmapsto x'
\end{eqnarray*}
The map $\varphi$ preserves the  incidence : if $i$ is  incident to $x$ in $\Gamma_m$, we have $i \in
\gamma(x)$; so, from above  we have: $i \in \gamma'(x')$: $i$ is incident to $x'$ in $\Gamma'_m$ : $\varphi\circ\alpha(i)=\alpha'(i)$.\\

\noindent Finally, we complete $\varphi$ by a bijection between isolated vertices, and a bijection between isolated edges.
\item[-] $ii)\Longleftrightarrow $iii) : from above. \qedhere
\end{itemize}
\end{proof}


\section{Character of the $\overline{\mathbb{F}}_p$-represensation $\rho_{\Gamma}$ of $\mathbb{E}$ }

\noindent  Let ${\displaystyle \rho_{\Gamma} : \mathbb{E} \longrightarrow GL( \mathbb{V})}$ be the representation associated to the graph $\Gamma=(V;E,\varepsilon)$. 
Classically  the character $ \chi_{\rho_{\Gamma}}: \mathbb{E} \longrightarrow \overline{\mathbb{F}}_p$ associated with the representation $\rho_{\Gamma}$ is 
\begin{equation}\label{characterrep}
\chi_{\rho_{\Gamma}}(a):= \Tr(\rho_{\Gamma}(a)),  \;\; a \in \mathbb{E}.
\end{equation}
Depending on the context, we will sometimes write $\chi_{\rho_{\Gamma}}$ as $\chi_{\rho}$ or $\chi_{\Gamma}$.\\
By expanding the formula \ref{characterrep}, we obtain:
\begin{equation}
\chi_{\rho}(a) = \sum_{x \in V}(-1)^{| \sigma(a)\cap \sigma(\gamma(x))|}= \sum_{x \in V}(-1)^{deg_{\Gamma(a)}(x)} \in \mathbb{F}_p,
\end{equation}
where $\Gamma(a)$ is the subgraph induced by the edges of $a$. By convention, we will write: $ (-1)^{deg_{\Gamma(0)}(x)}=(-1)^{deg_{\emptyset}(x)} =0$.\\
We define 

\begin{equation}
s_{\rho}(a)=  \sum_{    x \in V(\Gamma(a))   }   (-1)^{deg_{\Gamma(a)}(x)  } \in \mathbb{F}_p
\end{equation}
the  {\bf signature} of the character $\chi_{\rho}$, and  the {\bf thinness of  $\chi_{\rho}$} by
\begin{equation}
t(a)= \vert(V(\Gamma(a))^c\vert.
\end{equation}
We denote $\overline{t(a)}= t(a) \mod p$, and more generally for $s \in \mathbb{Z}: \overline{s}= s \mod p.$\\
Thus, we can write:
\begin{equation}
\chi_{\rho}(a) = s_{\rho}(a)+ \overline{t(a)}.
\end{equation}


\begin{thm}  Let  $\Gamma_m=(V;\mathcal{E}_m, \alpha)$ and $\Gamma'_m=(V'; \mathcal{E}_m, \alpha')$ be two connected $m$-graphs; suppose  that $p > \max\{ |V|, |V'| \}$;
the following assertions are equivalent:
\begin{itemize}
\item[i)]  $\Gamma_m \simeq \Gamma'_m$;
\item[ii)] $\chi_{\Gamma_{m}} = \chi_{\Gamma'_{m}}.$
\end{itemize}
\end{thm}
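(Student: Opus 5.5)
The plan is to reduce the statement to Theorem \ref{graphEquivModules} and Theorem \ref{frobenius}, reading $\Gamma_m \simeq \Gamma'_m$ as $m$-isomorphism. The direction i) $\Rightarrow$ ii) is immediate. By Theorem \ref{graphEquivModules}, an $m$-isomorphism gives an isomorphism of $R_m$-modules $\mathbb{V}(\Gamma_m)\simeq\mathbb{V}(\Gamma'_m)$. Equivalent representations have equal traces, so $\chi_{\Gamma_m}=\chi_{\Gamma'_m}$.

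For ii) $\Rightarrow$ i), I first settle the vertex counts. Evaluating the characters at $0\in\mathbb{E}_m$ gives $\chi_{\Gamma_m}(0)=\overline{|V|}$ and $\chi_{\Gamma'_m}(0)=\overline{|V'|}$. Since $p>\max\{|V|,|V'|\}$, equality mod $p$ forces $|V|=|V'|=n$.

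Next I write both modules in the complete system $(\xi_a\oplus R_m)_{a\in\mathbb{E}_m}$. By Theorem \ref{canondecomposition},
\[
\mathbb{V}(\Gamma_m)\simeq\bigoplus_{a}m_a(\xi_a\oplus R_m),\qquad \mathbb{V}(\Gamma'_m)\simeq\bigoplus_a m'_a(\xi_a\oplus R_m),
\]
where $m_a=\#\{x\in V:\gamma(x)=a\}$ and similarly for $m'_a$. These multiplicities are nonnegative integers with $m_a\le n<p$ and $m'_a\le n<p$. Theorem \ref{frobenius} applied to equal characters gives $m_a\equiv m'_a \pmod p$. Because both lie in $[0,p)$, this forces $m_a=m'_a$ for every $a$. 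So the modules have identical multiplicities, hence are isomorphic, and Theorem \ref{graphEquivModules} ii) $\Rightarrow$ i) yields the $m$-isomorphism.

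The main obstacle is the lifting from congruences mod $p$ to genuine equalities of multiplicities. This is exactly where the hypothesis $p>\max\{|V|,|V'|\}$ enters: every $m_a$ is bounded by the number of vertices. So the key point to check carefully is that every multiplicity counts vertices, via $m_a=|\gamma^{-1}(a)|$.

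Connectedness makes the description of the $m_a$ explicit. For $m\ge 2$ a connected graph has no isolated vertices and no isolated edges, so every $m_a\in\{0,1\}$ and $\gamma$ is injective. The argument above does not actually need this, but it is a convenient sanity check. It also covers the edge case $m=1$, where both graphs are a single edge and are trivially $m$-isomorphic.
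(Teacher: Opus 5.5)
Your proposal is correct and follows essentially the same route as the paper. For i)$\Rightarrow$ii) you use Theorem~\ref{graphEquivModules} and invariance of the trace under conjugation. For ii)$\Rightarrow$i) you use Theorem~\ref{frobenius} to get $m_a\equiv m'_a \pmod p$, lift this to equality because $m_a=|\gamma^{-1}(a)|\le |V|<p$ (and likewise $m'_a\le|V'|<p$), and then conclude with Theorem~\ref{graphEquivModules}. Your bound $m_a\le|V|$ is actually the accurate one where the paper writes a strict inequality, and your preliminary step $|V|=|V'|$ via $\chi(0)$ is harmless but not needed.
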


\begin{proof} \mbox{}
\begin{itemize}
\item[-] If $\chi_{\Gamma_m} = \chi_{\Gamma'_m}$ by applying Theorem  \ref{frobenius}  and  Theorem  \ref{graphEquivModules}  we have:
$$ \forall \; a \; \; m_a \equiv m'_a  \; \mod p$$
but $m_a, m'_a < |V|,|V'| < p$ so that $m_a=m'_a$ and $\mathbb{V} \simeq \mathbb{V}'$ so 
$\Gamma_m \simeq \Gamma'_m$.
\item[-]  Assume now that $\Gamma_m \simeq \Gamma'_m$; by Theorem  \ref{graphEquivModules}: $ {\displaystyle \rho_{\Gamma_m}\simeq \rho_{\Gamma'_m}}$,
it exists $\varphi: W \longrightarrow W'$ a $ \overline{\mathbb{F}}_p$-isomorphism such that for all $a \in {E}_m$
\[
\varphi\circ \rho (a)\circ  \varphi^{-1}= \rho'(a)
\]
$\chi_{\Gamma_{m}}= \Tr(\rho(a))= \Tr(\varphi\circ \rho (a)\circ  \varphi^{-1})= \Tr(\rho'(a))=\chi_{\Gamma'_{m}}$.\qedhere
\end{itemize}
\end{proof}

Some properties of  $\chi_{\rho}$ and $s_{\rho}$ are recorded in the following.


 \begin{prop}\textrm{}\\
 Let $\Gamma=(V;E,\varepsilon)$ be a simple connected graph with $|V|=n \ge 2$ and $|E|=m$; we have the following properties:
 
\begin{itemize}
\item[i)] $\chi_{\rho}(0) =  \overline{n} $;
\item[ii)]  ${\displaystyle \sum_{e\in E}\chi_{\gamma(x)}(e)= \overline{-deg(x)+ t(a)}}$ and  ${\displaystyle \sum_{e\in \gamma(x)}\chi_{\gamma(x)}(e)=\overline{-deg(x)}}$;
\item[iii)] if $\Gamma(a)=L_k$ the line with $k$ edges, then  $\chi_{\rho}(a)=\overline{|\sigma(a)|-4 +t(a)}$;
\item[iv)] if $\Gamma(a)=C_k$ the cycle with $k$ edges, then $\chi_{\rho}(a)=\overline{|\sigma(a)| + t(a)}$;
\item[v)]  if $\Gamma(a)=K_k$ the complete graph with $k$ vertices, then $s_{\rho}(a)=(-1)^{k-1}\overline{|\sigma(a)|} +
\overline{t(a)}$.
\end{itemize}
\end{prop}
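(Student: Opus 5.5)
The plan is to derive each item from the explicit trace formula
\[
\chi_{\rho}(a)=\sum_{x\in V}(-1)^{deg_{\Gamma(a)}(x)} ,
\]
together with the splitting $\chi_{\rho}(a)=s_{\rho}(a)+\overline{t(a)}$. This splitting holds because every vertex outside $V(\Gamma(a))$ has degree $0$ in $\Gamma(a)$ and so contributes $+1$. Once that is in place, items iii)--v) reduce to counting vertices of odd and even degree in the subgraph $\Gamma(a)$.

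For i), I will avoid the convention for $\Gamma(0)$ and argue from the definition (\ref{rho}). Since $\chi_{\gamma(x)}$ is a homomorphism, $\chi_{\gamma(x)}(0)=1$ for every $x$. Hence $\rho_{\Gamma}(0)=Id_{\mathbb{V}}$, and its trace is $\overline{n}$.

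For ii), I will use (\ref{carac}): $\chi_{\gamma(x)}(e)=-1$ exactly for the $deg(x)$ edges incident to $x$, and $+1$ for the others. Summing over $e\in\gamma(x)$ gives $\overline{-deg(x)}$ directly. Summing over all of $E$ gives $-deg(x)$ plus the number $m-deg(x)$ of edges not incident to $x$. That second term is the quantity playing the role of the ``$t$'' term in the statement, and I will make this identification explicit.

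For iii)--v) I only compute $s_{\rho}(a)$ and then add $\overline{t(a)}$.
\begin{itemize}
\item For a cycle $C_k$: every vertex has degree $2$ and there are $k=|\sigma(a)|$ of them, so $s_{\rho}(a)=\overline{|\sigma(a)|}$. This gives iv).
\item For a chain: the two end vertices have degree $1$ and contribute $-1$ each; the interior vertices have degree $2$ and contribute $+1$ each. So $s_{\rho}(a)$ equals the number of interior vertices minus $2$, to be rewritten in terms of $|\sigma(a)|$.
\item For $K_k$: every vertex has degree $k-1$, so $s_{\rho}(a)=(-1)^{k-1}\overline{k}$.
\end{itemize}

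The main obstacle is matching these counts to the exact form written in the statement, which depends on the paper's indexing conventions. For a chain with $k$ edges there are $k+1$ vertices and $k-1$ interior ones, which gives $s_{\rho}(a)=\overline{|\sigma(a)|-3}$. Obtaining the stated $|\sigma(a)|-4$ requires reading $L_k$ in the indexing of the definition of chains, so I will fix that convention first. Similarly, for $K_k$ the natural expression is in terms of the number of vertices $k$, whereas $|\sigma(a)|=k(k-1)/2$. I will state the result as $(-1)^{k-1}\overline{k}$ plus $\overline{t(a)}$ and check in which sense it agrees with the printed formula. If it does not agree, I will record the corrected constant or variable rather than force agreement.
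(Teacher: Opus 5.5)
Your approach is the paper's: split $\chi_{\rho}(a)=s_{\rho}(a)+\overline{t(a)}$ (uncovered vertices contribute $+1$) and read off $(-1)^{\deg_{\Gamma(a)}(x)}$ vertex by vertex. For i), your argument via $\rho_{\Gamma}(0)=\mathrm{Id}_{\mathbb{V}}$ is cleaner than the paper's, which invokes $(-1)^{\deg_{\emptyset}(x)}=1$ even though its own stated convention sets this symbol to $0$. Your counts are the correct ones, and that is where the plan goes wrong. Items iii), v) and the first half of ii) are false as printed, so the two reconciliation steps you propose would fail. You should commit to corrected statements instead.

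\textbf{Item iii).} The paper's proof lists the $k+1$ vertices $x_1,\dots,x_{k+1}$ but writes $s_{\rho}(a)=-1+\underbrace{1+\cdots+1}_{k-2}-1$. The interior vertices are $x_2,\dots,x_k$, so there are $k-1$ of them and $s_{\rho}(a)=\overline{k-3}$. No indexing convention for $L_k$ can produce $-4$, because the formula is written in terms of $|\sigma(a)|$. A chain with $j\ge 1$ edges always has two ends of degree $1$ and $j-1$ interior vertices of degree $2$, so $s_{\rho}(a)=\overline{|\sigma(a)|-3}$ whatever the chain is called. The paper's own tables agree with you: $\chi_{\rho}(e_1\oplus e_2)=-1$ for $L_2$ and $\chi_{\rho}(e_1\oplus e_2\oplus e_3)=0$ for $L_3$, both with $t=0$. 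The printed formula would give $-2$ and $-1$.

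\textbf{Item v).} Here $s_{\rho}(a)=(-1)^{k-1}\overline{k}$, so $\chi_{\rho}(a)=(-1)^{k-1}\overline{k}+\overline{t(a)}$. The printed version uses $|\sigma(a)|=k(k-1)/2$ in place of $k$ and attaches $\overline{t(a)}$ to $s_{\rho}$ rather than $\chi_{\rho}$. It is right only when $k(k-3)/2\equiv 0 \pmod p$, e.g.\ $k=3$, which is why the $C_3$ row of the table does not reveal the error. For $K_2$ (Example 1), $\rho_{\Gamma}(e_1)=-\mathrm{Id}$ has trace $-2$, not the printed $-1$.

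\textbf{Item ii).} No $a$ appears in this item. The sum is $\overline{m-2\deg(x)}$. The term $m-\deg(x)$ is an edge count and cannot be identified with $t$, which counts uncovered vertices. The natural candidate $t(\gamma(x))=n-1-\deg(x)$ equals $m-\deg(x)$ only when the connected graph $\Gamma$ is a tree. For $C_3$ the sum is $-1$, while $-\deg(x)+t(\gamma(x))=-2$.

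So state and prove ii) as $\overline{m-2\deg(x)}$, iii) as $\overline{|\sigma(a)|-3+t(a)}$, and v) as above; your computations already establish these. The subsequent Corollary must be corrected in the same way: $m-3$ for $L_k$ and $(-1)^{k-1}k$ for $K_k$.
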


\begin{proof}\textrm{}\\
 
 \begin{itemize}
\item[i)] $ \chi_{\rho}(0) =   \overline{n}  $ because  $(-1)^{deg_{\emptyset}(x)} =1$;
\item[ii)]we have: ${\displaystyle \chi_{\gamma(x)}e=(-1)^{deg_{\Gamma(e)}(x)}}$, if $x\in \varepsilon(e)$ we obtain $-1$ otherwise we obtain $1$, hence the result follows;
\item[iii)] ${\displaystyle  V(\Gamma(a))=\{x_1,\ldots, x_{k+1} \}}$, with ${\displaystyle  a=e_1 +\cdots +e_k}$ and 
$ \varepsilon(e_i)=\{x_i, x_{i+1} \} $:\\
$ s_{\rho}(a)=-1 +\underbrace{1+ \cdots +1}_{k-2} -1= k-4  \mod p = \overline{|\sigma(a)| -4}$;
\item[iv)] in this case:  $  x_{k+1}=x_1$, so $  s_{\rho}(a)=\underbrace{1+ \cdots +1}_{k}=\overline{k}=\overline{|\sigma(a)|}$;
\item[v)] it is   clear.\qedhere
\end{itemize}
\end{proof}

Let's set $a_m=\oplus_{e \in E}e$, then 


\begin{cor}\textrm{}\\
i) if $\Gamma= L_k$, then $\chi_{\rho}(a_m)= m-4  \mod p$;\\
ii) if $\Gamma= C_k$, then $\chi_{\rho} (a_m)=m \mod p $;\\
iii) if $\Gamma=K_k$, then $\chi_{\rho}(a_m)= (-1)^{k-1} \frac{k(k-1)}{2} \mod p$.
\end{cor}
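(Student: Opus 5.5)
The plan is to specialize the preceding Proposition to the element $a=a_m=\oplus_{e\in E}e$, whose support is all of $E$. First I record the two facts that make this specialization work.

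\begin{itemize}
\item The support is $\sigma(a_m)=E$, so $|\sigma(a_m)|=m$.
\item The subgraph $\Gamma(a_m)$ induced by the edges of $a_m$ is $\Gamma$ itself, at least when $\Gamma$ has no isolated vertices, which holds for $L_k$, $C_k$ and $K_k$ with $k\ge 2$. Hence $V(\Gamma(a_m))=V$, and the thinness is $t(a_m)=|V(\Gamma(a_m))^c|=0$.
\end{itemize}

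From the decomposition $\chi_{\rho}(a)=s_{\rho}(a)+\overline{t(a)}$ it then follows that $\chi_{\rho}(a_m)=s_{\rho}(a_m)$.

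Next I treat the three cases by direct substitution into items iii), iv) and v) of the Proposition.

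\begin{itemize}
\item For $\Gamma=L_k$ we have $\Gamma(a_m)=L_k$. Item iii) gives $\chi_{\rho}(a_m)=\overline{|\sigma(a_m)|-4+t(a_m)}=\overline{m-4}$, which is i).
\item For $\Gamma=C_k$, item iv) gives $\chi_{\rho}(a_m)=\overline{m+0}=\overline{m}$, which is ii).
\item For $\Gamma=K_k$, item v) gives $s_{\rho}(a_m)=(-1)^{k-1}\overline{m}+\overline{t(a_m)}=(-1)^{k-1}\overline{m}$. Since $t(a_m)=0$, this is also $\chi_{\rho}(a_m)$. Substituting $m=|E(K_k)|=\frac{k(k-1)}{2}$ yields iii).
\end{itemize}

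The only step requiring care is the identification $\Gamma(a_m)=\Gamma$ together with $t(a_m)=0$. I would justify it by noting that every vertex of a connected graph with at least one edge is incident to some edge of $\sigma(a_m)=E$. The rest is bookkeeping: reduce integers mod $p$ with the notation $\overline{s}$, and invoke the Proposition as stated, without recomputing the signatures.
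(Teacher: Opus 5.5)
Your route is the paper's own: its proof is the single line ``$t(a_m)=0$'', after which items iii)--v) of the preceding Proposition are specialized. Your argument that $\Gamma(a_m)=\Gamma$ and $t(a_m)=0$ is fine. The proof breaks at the step you deliberately skipped, namely invoking items iii) and v) of that Proposition ``as stated, without recomputing the signatures.'' Both items are miscomputed. As a result parts i) and iii) of the corollary are false, and no argument can establish them.

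Since $\sigma(a_m)=E$, you have directly $\chi_\rho(a_m)=\sum_{x\in V}(-1)^{\deg_\Gamma(x)} \bmod p$. Take $L_k$, with $m=k$ edges and $k+1$ vertices. Two vertices have degree $1$, and $k-1$ vertices have degree $2$ (not $k-2$, as the paper's proof of item iii) counts). So $\chi_\rho(a_m)=\overline{m-3}$, which differs from $\overline{m-4}$ for every prime $p$. The paper's own computations confirm this. For $L_2$ it finds $\chi_\rho(e_1\oplus e_2)=-1$, not $-2$. Its table for $L_3$ gives $\chi_\Gamma(e_1\oplus e_2\oplus e_3)=0$, not $-1$.

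For $K_k$, each of the $k$ vertices has degree $k-1$, so $\chi_\rho(a_m)=(-1)^{k-1}\overline{k}$. Item v) puts the number of edges $|\sigma(a)|$ where the number of vertices $|V(\Gamma(a))|$ belongs. Already for $K_4$ the true value is $-4$, against the claimed $-6$, and these differ modulo every odd prime. In general the two agree only when $p\mid k(k-3)$, for instance $k=3$, where $K_3=C_3$.

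Part ii) is correct: $k$ vertices of degree $2$ give $\overline{k}=\overline{m}$, and your argument for it stands. To repair the rest, replace the appeal to items iii) and v) with the direct degree count above. What you can actually prove is $\chi_\rho(a_m)=\overline{m-3}$ for $L_k$ and $\chi_\rho(a_m)=(-1)^{k-1}\overline{k}$ for $K_k$.
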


\begin{proof} Clear since $t(a_m)=0$ (the graph is without isolated vertex).
\end{proof}
\noindent Other results are given in:

\begin{prop}\textrm{}
 Let $\Gamma=(V;E,\varepsilon)$ be a simple connected graph with $|V|=n \ge 2$ and $|E|=m$;  moreover choose $p >n+2m$, then
\begin{itemize}
\item[i)]  $s_{\rho}(a)=-2 |\sigma(a)| \mod p \Longleftrightarrow \Gamma(a)$ is a matching;
\item[ii)]  $\chi_{\rho}(a_m)=n  \mod p \Longleftrightarrow \Gamma$ is Eulerian;
\item[iii]  $\chi_{\rho}(a)=s_{\rho}(a)  \Longleftrightarrow \Gamma(a)$ is a cover of $\Gamma$ (that is 
the edges of $a$ are  incident to any vertex of  $\Gamma$).
\end{itemize}
\end{prop}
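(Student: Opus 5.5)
The plan is to reduce all three items to integer identities about the degrees in $\Gamma(a)$, and then lift congruences mod $p$ to equalities in $\mathbb{Z}$ using $p>n+2m$. For $a\in\mathbb{E}$, $a\neq 0$, let $v_o(a)$ and $v_e(a)$ be the numbers of vertices of $\Gamma(a)$ of odd and of even degree in $\Gamma(a)$. Every vertex of $\Gamma(a)$ has degree $\ge 1$, so
\[
s_{\rho}(a)=\overline{v_e(a)-v_o(a)},\qquad v_e(a)+v_o(a)=|V(\Gamma(a))|\le n .
\]
Let $d_x$ denote $deg_{\Gamma(a)}(x)$. The handshake identity gives $\sum_{x\in V(\Gamma(a))} d_x = 2|\sigma(a)|\le 2m$. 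The repeatedly used fact is this: if two integers $A,B$ satisfy $|A-B|<p$ and $\overline{A}=\overline{B}$, then $A=B$. Every quantity compared below is bounded in absolute value by $n+2m<p$.

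\emph{Item i).} If $\Gamma(a)$ is a matching, every vertex of $\Gamma(a)$ has degree $1$ and $|V(\Gamma(a))|=2|\sigma(a)|$. Hence $s_{\rho}(a)=\overline{-2|\sigma(a)|}$.

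Conversely, assume $\overline{v_e-v_o}=\overline{-2|\sigma(a)|}$. Since $|v_e-v_o+2|\sigma(a)||\le n+2m<p$, we get $v_o-v_e=2|\sigma(a)|$ in $\mathbb{Z}$. On the other hand, odd degrees are $\ge 1$ and even degrees are $\ge 2$, so
\[
2|\sigma(a)|=\sum_{x\in V(\Gamma(a))} d_x\ \ge\ v_o+2v_e .
\]
Combining the two gives $v_o-v_e\ge v_o+2v_e$, so $v_e=0$. Then equality must hold throughout the inequality, which forces every degree to equal $1$. So $\Gamma(a)$ is a matching.

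\emph{Item ii).} Since $\Gamma$ is connected with $n\ge 2$, it has no isolated vertex, so $t(a_m)=0$ and $\Gamma(a_m)=\Gamma$. Therefore $\chi_{\rho}(a_m)=\overline{n-2v_o(a_m)}$. Thus $\chi_{\rho}(a_m)=\overline{n}$ iff $p\mid 2v_o(a_m)$. As $p$ is odd and $v_o(a_m)\le n<p$, this holds iff $v_o(a_m)=0$, i.e.\ iff every vertex of $\Gamma$ has even degree. Since $\Gamma$ is connected, Euler's theorem then gives that this is equivalent to $\Gamma$ being Eulerian.

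The main obstacle is here. The paper's written definition of ``Eulerian'' (a cycle through all \emph{vertices} exactly once) is really the Hamiltonian notion, and with that reading the equivalence fails. I will therefore state that ``Eulerian'' is taken in its standard sense: a closed chain using every \emph{edge} exactly once. With this reading, the classical theorem (connected plus all degrees even) closes the argument.

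\emph{Item iii).} By the decomposition $\chi_{\rho}(a)=s_{\rho}(a)+\overline{t(a)}$, we have $\chi_{\rho}(a)=s_{\rho}(a)$ iff $\overline{t(a)}=0$. Since $0\le t(a)\le n<p$, this holds iff $t(a)=0$. That is, $V(\Gamma(a))=V$, meaning every vertex of $\Gamma$ is incident to an edge of $\sigma(a)$, which is exactly the statement that $\Gamma(a)$ is a cover of $\Gamma$.
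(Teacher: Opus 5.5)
Your proposal is correct and follows the paper's proof essentially step by step. In i) you lift the congruence on the even/odd vertex counts from $\mathbb{F}_p$ to $\mathbb{Z}$ via $p>n+2m$, using the handshake identity with $d_x\ge 2$ for even degrees where the paper uses $|V(\Gamma(a))|\le 2|\sigma(a)|$; in ii) you use $\chi_\rho(a_m)=\overline{n-2v_o}$, and in iii) the bound $0\le t(a)\le n<p$. Your reading of ``Eulerian'' as a closed chain through every edge exactly once is what the paper's argument implicitly relies on (Euler's theorem), and you are right that the paper's written definition describes a Hamiltonian cycle, under which ii) fails (e.g.\ $K_4$).
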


\begin{proof}\textrm{}
\begin{itemize}
\item[i)]  Assume that $\Gamma(a)$ is a matching: $\varepsilon(e) \cap \varepsilon(e')= \emptyset$ for all distinct $e,e' \in \sigma(a)$ so that $x \in V(\Gamma(a))  \Longrightarrow \deg_{\Gamma(a)}x=1$ and $s_{\rho}(a) =- |V(\Gamma(a))| \mod p$; but 
$| V(\Gamma(a))  |=  2 |\sigma(a)|$,  so $s_{\rho}(a)=-2 |\sigma(a)| \mod p$.\\\\
Reciprocally  suppose $s_{\rho}(a)=-2 |\sigma(a)| \mod p$; define \\
$S=| \{ x \in V(\Gamma(a)) : \deg(x)  \text{ is even} \}|$\\
$T=| \{ x \in V(\Gamma(a)) : \deg(x)   \text{ is odd} \}|.$\\
We have $s_{\rho}(a)= S-T \mod p$, that is 
\[
T \equiv S+ 2 |\sigma(a)| \; \mod p
\]
but $0 \le T <p$ and $0 \le S+2|\sigma(a)| \le n+2m <p$ this implies that $ T= S+ 2 |\sigma(a)|$; moreover, 
$|V(\Gamma(a))|=S+T \le 2 |\sigma(a)|$ hence $T=S+2|\sigma(a)| \ge S+T$: necessarily $S=0$ and we deduce 
$|V(\Gamma(a))| =T=2 |\sigma(a)|$: this implies that all the edges of $a$ are disjoint; hence, $\Gamma(a)$ is a  matching;
\item[ii)]  $\Gamma(a_m)= \Gamma$ since $\Gamma$ is connected: $t(a_m)=0$ and $\chi_{\rho}(a_m)=\sum_{x \in V} (-1)^{deg(x)}=S-T$; $S+T=n$: so $\chi_{\rho}(a_m)=n=S+T \mod p $ iff $T=0  \mod p$  iff $\Gamma$ is Eulerian;
\item[iii)]  $\chi_{\rho}(a)=s_{\rho}(a)$ iff $t(a)=0$, but:
\begin{itemize}
\item[-]  if $t(a)=0 \in \mathbb{F}_p$, then $p$ divide $|V(\Gamma(a))^c|$; or this number is $\le n <p $ so that $V(\Gamma(a))^c= \emptyset$ and $V(\Gamma(a)=V: \Gamma(a)$ is a cover,
\item[-]  if $V(\Gamma(a))=V$ clearly $t(a)=0$. 
\end{itemize}

\end{itemize}
 \end{proof}


\section{Illustrative examples}

\medskip
\begin{examp}
Let $\Gamma=(V;E,\varepsilon)$ be the simple graph defined by
\[
V=\{x_1,x_2\}\; \textrm{and}\; 
E=\{e_1\},
\]
$m=1, n=2$\\
\begin{figure}[h]
\centering
\begin{tikzpicture}[scale=1.2]
    \node[circle, draw, fill=white, minimum size=0.8cm] (x1) at (0,0) {$x_1$};
    \node[circle, draw, fill=white, minimum size=0.8cm] (x2) at (2,0) {$x_2$};
    \draw[thick] (x1) -- (x2) node[midway, above] {$e_1$};
\end{tikzpicture}
\caption{\footnotesize Graph $\Gamma$ with $V=\{x_1,x_2\}$, $E=\{e_1\}$, and $\varepsilon(e_1)=\{x_1,x_2\}$.}
\label{fig:gamma}
\end{figure}

where the unique edge $e_1$ joins the two vertices, that is,
\[
\varepsilon(e_1)=\{x_1,x_2\}.
\]

Since $\mathbb{E}=\{0,e_1\}$ with $e_1\oplus e_1=0$, the group $\mathbb{E}$ is isomorphic to
$\mathbb{Z}/2\mathbb{Z}$.

\medskip
The vector space associated with the vertices is
\[
\mathbb{V}=\overline{\mathbb{F}}_{p}[V]= \overline{\mathbb{F}}_{p}x_1 \oplus \overline{\mathbb{F}}_{p} x_2.
\]

Now we define the map $\gamma : V \to \mathbb{E}$ : for each vertex $x\in V$:
\[
\gamma(x)=\underset{x \in\varepsilon(e)}{\oplus} e.
\]
Hence,
\[
\gamma(x_1)=e_1, \qquad \gamma(x_2)=e_1.
\]

  For $x\in V$ and $e\in E$, $\chi_{\gamma(x)}$ satisfies
\[
\chi_{\gamma(x)}(e)=
\begin{cases}
-1 & \text{if } x\in\varepsilon(e),\\
\;\;1 & \text{otherwise}.
\end{cases}
\]
Therefore, we get
\[
\chi_{\gamma(x_1)}(e_1)=-1, ~~ and ~~
\chi_{\gamma(x_2)}(e_1)=-1.
\]

\noindent The definition of the representation:  $\rho_\Gamma: \mathbb{E}\to\Aut_{\mathbb{F}_{p}}(\mathbb{V})$ is given by:
\[
\rho_\Gamma(a)(x)=\chi_{\gamma(x)}(a)\,x,\; \textrm{where}\; 
 a\in \mathbb{E},\; x\in V.
\]

\noindent For the neutral element $0\in  \mathbb{E}$, we have
\[
\rho_\Gamma(0)(x)=x,
\]
which implies that $\rho_\Gamma(0)=\mathrm{Id}_V$.

\noindent For $a=e_1$, we obtain:
\[
\rho_\Gamma(e_1)(x_1)=-x_1, 
\qquad
\rho_\Gamma(e_1)(x_2)=-x_2.
\]
\end{examp}


\begin{examp}
Consider the path graph
\[
V= \{x_1, x_2 , x_3\}
\qquad
E=\{e_1,e_2\}.
\]
$m=2,~ n=3$\\
that is

\begin{figure}[h]
\centering
\begin{tikzpicture}[scale=1.2]
    \node[circle, draw, fill=white, minimum size=0.8cm] (x1) at (0,0) {$x_1$};
    \node[circle, draw, fill=white, minimum size=0.8cm] (x2) at (2,0) {$x_2$};
    \node[circle, draw, fill=white, minimum size=0.8cm] (x3) at (4,0) {$x_3$};
    
    \draw[thick] (x1) -- (x2) node[midway, above] {$e_1$};
    \draw[thick] (x2) -- (x3) node[midway, above] {$e_2$};
\end{tikzpicture}
\caption{\footnotesize Path graph with $V=\{x_1,x_2,x_3\}$ and $E=\{e_1,e_2\}$, where $\varepsilon(e_1)=\{x_1,x_2\}$, $\varepsilon(e_2)=\{x_2,x_3\}$.}
\label{fig:path-graph}
\end{figure}

\begin{itemize}
\item For $a=e_1$:
\[
\deg_{\Gamma(e_1)}(x_1)=\deg_{\Gamma(e_1)}(x_2)=1,
\]

\[
s_{\rho}(e_1)=(-1)^1+(-1)^1=-2, t(e_1)=1
\]
\[
\chi_{\rho}(e_1)= -1
\]
\item For $a=e_2$:
\[
\deg_{\Gamma(e_2)}(x_2)=\deg_{\Gamma(e_2)}(x_3)=1,
\]
\[
s_{\rho}(e_2)=(-1)^1+(-1)^1=-2, t(e_2)=1
\]
thus
\[
\chi_{\rho}(e_2)= -1
\]

\item For $a=e_1+e_2$:
\[
\deg(x_1)=1,\quad \deg(x_2)=2,\quad \deg(x_3)=1,
\]
\[
s_{\rho}(e_1 \oplus e_2)=(-1)^1+(-1)^2+(-1)^1=-1, t(e_1 \oplus e_2)=0
\]
\[
\chi_{\rho}(e_1\oplus e_2)= -1
\]

\end{itemize}
\end{examp}

\subsection{Examples for m=2} \mbox{}\\

Here $E=\{ e_1,,e_2 \}$.
We use the following "order" for $\mathbb{E}; 0, e_1, e_2, e_1\oplus e_2$, so that 
$$(m_a)_{a \in \mathbb{E}}=(m_0, m_{e_1}, m_{e_2}, m_{e_1 \oplus e_2}) \in \mathbb{N}^4.$$

In the table,  $I_n$ denote $n$ isolated vertices; the values $\chi_{\Gamma}(e)$ are integers modulo p.\\\\
\begin{center}
\renewcommand{\arraystretch}{1.2}
\begin{tabular}{|l|c|c|c|c|c|}
\hline
Graph $\Gamma$   & n  & $(m_a)$ & $\chi_{\Gamma}(e_1)$  & $\chi_{\Gamma}(e_2)$  & $\chi_{\Gamma}(e_1 \oplus e_2)$  \\
\hline 
$L_2$                      & 3  & (0,1,1,1) &             -1                      &                      -1             &             -1                       \\
\hline
$L_2+I_1$               & 4  & (1,1,1,1) &             0                        &                        0           &             0                         \\
\hline
$2L_1$                    & 4  & (0,2,2,0) &               0                        &                        0          &             -4                          \\
\hline
$2L_1+I_1$             & 5  & (1,2,2,0) &               1                       &                      1               &            -3                         \\
\hline
\end{tabular}
\end{center}\mbox{}\\

\noindent As example the canonical decomposition for $\Gamma= L_2+I_1$ is 
$$ \mathbb{V}= (\xi_0 \oplus R) \oplus (\xi_{e_1} \oplus R) \oplus (\xi_{e_2} \oplus R) \oplus (\xi_{e_1\oplus e_2} \oplus R).$$

\begin{rem} If $m_0$ is the number of isolated vertices, $m_2$ the number of isolated edges, $m_1$ the number of remaining vertices  we have:
\[
n=m_0 + 2m_2+ m_1.
\]
\end{rem}

\subsection{Examples for m=3} \mbox{}\\
Here the "order" is $(m_a)=(m_0,m_{e_1}, m_{e_2},m_{e_3}, m_{e_1\oplus e_2},  m_{e_1\oplus  e_3}, m_{e_2 \oplus  e_3}, m_{e_1\oplus e_2
\oplus e_3})$.\\
Example of calculation of the character of $\Gamma=L_3$:\\
$\gamma(x_1)=e_1;\\
\gamma(x_2)=e_1\oplus e_2;\\
\gamma(x_3)=e_2 \oplus e_3;\\
\gamma(x_4)= e_3;$\\
with the canonical decomposition: 
$$ \mathbb{V}= (\xi_{e_1} \oplus R) \oplus (\xi_{e_3} \oplus R) \oplus (\xi_{e_1\oplus e_2} \oplus R) \oplus  (\xi_{e_2\oplus e_3} \oplus R).$$.

\begin{center}
\renewcommand{\arraystretch}{1.2}
\begin{tabular}{|c|c|c|c|c|c|c|c|c|}
\hline
$\mathbb{E}$       & 0  & $e_1$ & $e_2$ & $e_3$ & $e_1\oplus e_2$  &  $e_1\oplus e_3$ &  $e_2\oplus e_3$ &  $e_1\oplus e_2 \oplus e_3$ \\
\hline
$\chi_{\gamma(x_1)} $ &1  & -1     & 1          &   1     &       -1                    &           -1               &             1              &              -1 \\
\hline
$\chi_{\gamma(x_2)} $ &1  & -1     & -1          &   1     &       1                    &           -1               &            - 1              &              1 \\
\hline
$\chi_{\gamma(x_3)} $ &1  & 1     &  -1          &  - 1     &       -1                    &           -1               &             1              &              1 \\
\hline
$\chi_{\gamma(x_4)} $ &1  & 1     & 1          &   -1     &       1                    &           -1               &            - 1              &              -1 \\
\hline
$\chi_{\Gamma} $ &4  & 0     & 0         &   0    &       0                    &           -4               &             0              &              0 \\
\hline
\end{tabular}
\end{center}\mbox{}\\\\
In this table  the rows  are "multiplicative", and the columns are " additive" (modulo p). \\\\
Now some examples with $m=3$. The last column is 
$$ \chi_{\Gamma}\;:\; [\chi_{\Gamma}(e_1), \chi_{\Gamma}(e_2), \chi_{\Gamma}(e_3); \chi_{\Gamma}(e_1\oplus e_2), 
\chi_{\Gamma}(e_1\oplus e_3), \chi_{\Gamma}( e_2 \oplus e_3); \chi_{\Gamma}(e_1\oplus e_2 \oplus e_3)].
$$ \\
\begin{center}
\renewcommand{\arraystretch}{1.4}
\begin{tabular}{|c|c|c|c|}
\hline
Graphs      & $n$    & $(m_a)_{a \in \mathbb{E}}$   &  $\chi_{\Gamma}$ \\ 
\hline
$C_3$       & 3    & (0,0,0,0,1,1,1,0)                      & [-1,-1,-1;-1,-1,-1;3]\\
\hline
$L_3$        &  4  & (0,1,0,1,1,0,1,0)                      & [0,0,0;0,-4,0;0]      \\
\hline
$K_{1,3}$   & 4  & (0,1,1,1,0,0,0,1)                      & [0,0,0; 0,0,0;-4]       \\
\hline
$L_2+L_1$  & 5  & (0,1,2,1,1,0,0,0)                     & [1, 1,1,  1,-3,-3;-3]      \\
\hline
\end{tabular}
\end{center}\mbox{}\\\\


\section{Conclusion}

This article is an introduction to the theory of linear group representations applied to graph theory. We associate a representation to a graph, which allows us to decompose the graph canonically into modules.\\
The character of the representation allows us to highlight combinatorial properties of the graph. In our opinion, it would be interesting to explore this relationship further and, moreover, to demonstrate a correspondence between the combinatorial properties of graphs and the algebraic properties of the modules $\mathbb{V}(\Gamma)$. This could help to clarify known conjectures such as the \textsc{Kelly-Ullam} conjecture (reconstruction conjecture). The algorithmic aspect is also a promising avenue to explore and the development of algorithms to calculate the linear representation associated with a graph as well as its associated character will be the subject of a future article.\\
We have worked with the  \textsc{Galois} field $\mathbb{F}_{2}$, but in our work there should be no obstacle to using another field, $\mathbb{F}_{q}$, where $q$ is the odd prime, with $q\neq p$. For this, it would suffice to redefine the linear characters on the basis, which, in our opinion, is not very difficult.\\
Furthermore, other graph properties (coloring, stability, kernel, ...) should be visible thanks to the characters; it would be interesting to highlight them.\\
Finally, a generalization to hypergraphs, \cite{bre4} seems relatively straightforward. By modifying the incidence relation $\rho_{\Gamma}$, by taking $q\neq2$ to be prime, or by giving a different definition of the function $\gamma$, we could obtain more general results.\\
Other approaches involve studying directed (hyper)graphs and weighted (hyper)graphs.

\end{document}